\documentclass[11pt]{amsart}
\setlength\textheight{7.7in}
\setlength\textwidth{6.5in}
\setlength\oddsidemargin{0in}            
\setlength\evensidemargin{0in}
\setlength\parindent{0.25in}
\setlength\marginparwidth{0.8in}

\usepackage{amsmath, amssymb, amscd, amsthm, mathrsfs, url ,pinlabel,hyperref, verbatim, lipsum, wrapfig}
\usepackage[text={6.5in,9.5in}, centering, letterpaper, dvips]{geometry}
\usepackage{color,multirow,dcpic,latexsym,pictexwd,graphicx,pdfpages}
\usepackage{fancyhdr, tikz, setspace, tensor, enumitem, dirtytalk}
\usetikzlibrary{positioning, arrows.meta}
\usepackage{scrextend}

\definecolor{maroon}{rgb}{0.5, 0.0, 0.0}

\hypersetup{
  colorlinks   = true, 
  urlcolor     = maroon, 
  linkcolor    = maroon, 
  citecolor   = maroon 
}

\usepackage{adjustbox}
\newcommand{\CommaPunct}{\mathpunct{\raisebox{0.5ex}{,}}}

\usepackage[T1]{fontenc}
\usepackage{lmodern}
\DeclareUrlCommand{\bfurl}{}

\newtheorem{thm}{Theorem}[section]

\newtheorem{lem}[thm]{Lemma}

\newtheorem{pro}[thm]{Proposition}

\theoremstyle{definition}

\newtheorem*{deftn}{Definition}
\newtheorem{exmp}{Example}[section]

\newtheorem{introthm}{Theorem}

\newtheorem{introprob}{Problem}
\newtheorem{introques}{Question}

\theoremstyle{remark}
\newtheorem{rem}{Remark}

\newcommand\intK{\smash{\mathring{\nu(K_1)}}}
\newcommand\intJ{\smash{\mathring{\nu(K_2)}}}

\DeclareMathOperator{\Hom}{Hom}
\DeclareMathOperator{\SU}{SU}

\newlength\Colsep
\setlength\Colsep{10pt}

\title{On homology planes and contractible $4$-manifolds}

\author{Rodolfo Aguilar Aguilar}
\address{Institute of the Mathematical Sciences of the Americas, University of Miami, Coral Gables, FL 33124, USA}
\email{\href{mailto:aaguilar.rodolfo@gmail.com}{aaguilar.rodolfo@gmail.com}}
\urladdr{\url{https://sites.google.com/view/rodolfo-aguilar/}}

\author{O{\u{g}}uz \c{S}avk}
\address{CNRS and Laboratorie de Math\'ematiques Jean Leray, Nantes Universit\'e, 44322 Nantes, France}
\email{\url{oguz.savk@cnrs.fr}, \url{oguz.savk@univ-nantes.fr}}
\urladdr{\url{https://sites.google.com/view/oguzsavk/}}

\date{}

\begin{document}

\begin{abstract}

We call a non-trivial homology sphere a \emph{Kirby-Ramanujam sphere} if it bounds both a homology plane and a Mazur or Po\'enaru manifold. In 1980, Kirby found the first example by proving that the boundary of the Ramanujam surface bounds a Mazur manifold and it has remained a single example since then. By tracing their initial step, we provide the first additional examples and we present three infinite families of Kirby-Ramanujam spheres. Also, we show that one of our families of Kirby-Ramanujam spheres is diffeomorphic to the splice of two certain families of Brieskorn spheres. Since this family of Kirby-Ramanujam spheres bound contractible $4$-manifolds, they lie in the class of the trivial element in the homology cobordism group; however, both splice components are separately linearly independent in that group. 

\end{abstract}
\maketitle

\section{Introduction}

In algebraic geometry, the \emph{homology planes} are defined to be algebraic complex smooth quasi-projective surfaces with the same homology groups of the complex plane $\mathbb{C}^2$ in integer coefficients. Ramanujam provided the first example of a homology plane \cite{R71}, today this object is known as \emph{Ramanujam surface} $W(1)$ with the dual graph shown in Figure~\ref{fig:Ramanujam}. Further, he proved that $W(1)$ is a contractible $4$-manifold with a non-trivial homology sphere boundary and it is different than $\mathbb{C}^2$ up to algebraic isomorphism. Since $W(1) \times \mathbb{C}$ is diffeomorphic but not algebraically isomorphic to the complex space $\mathbb{C}^3$, the Ramanujam surface provided the first exotic algebraic structure on $\mathbb{C}^3$. It was also used by Seidel and Smith \cite{SS05} to produce the first examples of exotic symplectic structures on Euclidean spaces which are convex at infinity.

\begin{figure}[htbp]
\begin{center}
\includegraphics[width=0.35\columnwidth]{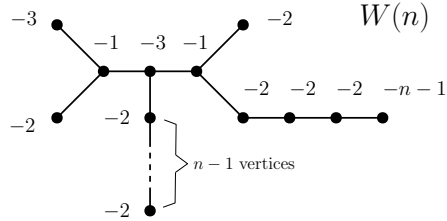}
\end{center}
\caption{A generalization of the Ramanujam surface $W(n)$.}
\label{fig:Ramanujam}
\end{figure}

The compact contractible smooth $4$-manifolds with non-trivial homology sphere boundaries were constructed in the eminent articles of Mazur \cite{M61} and Po\'enaru \cite{P60} simultaneously. The \emph{Mazur manifolds} are built with a single $0$-, $1$-, and $2$-handle, so they are obtained by adding an appropriate $2$-handle to the unknotted disk exterior of the $4$-ball $B^4$. Later, Mazur manifolds were systematically explored in the celebrated work of Akbulut and Kirby \cite{AK79}. In a similar vein, the \emph{Po\'enaru manifolds} are obtained by attaching an appropriate $2$-handle to a ribbon disk exterior of $B^4$, see Section~\ref{ldt} for details. Since contractible $4$-manifolds are core objects of Akbulut corks \cite{A91}, they have been extensively studied in low-dimensional topology, see the recent papers \cite{DHM20, HP20, Akb21, L22}. 

A classical problem in low-dimensional topology asks which homology spheres bound contractible $4$-manifolds, see \cite[Problem 4.2]{Ki78}. Around the 1980s, Kirby was able to find a Mazur manifold that has the same boundary as the Ramanujam surface up to diffeomorphism, see \cite[Pg.~56]{M80}. This valuable observation provides the initial motivation behind our definition. We aim to enrich the problem of $3$- and $4$-manifolds above by addressing the algebro-geometric objects as well.

\noindent \adjustbox{valign=t}{\begin{minipage}{0.6\linewidth}

\vskip\baselineskip

\begin{deftn}
A non-trivial homology sphere is said to be a \emph{Kirby-Ramanujam sphere} if it bounds both a homology plane and a Mazur or Po\'enaru manifold.
\end{deftn}
\end{minipage}}%
\hfill
\adjustbox{valign=t}{\begin{minipage}[t]{0.5\linewidth}
\begin{center}
\includegraphics[width=0.6\columnwidth]{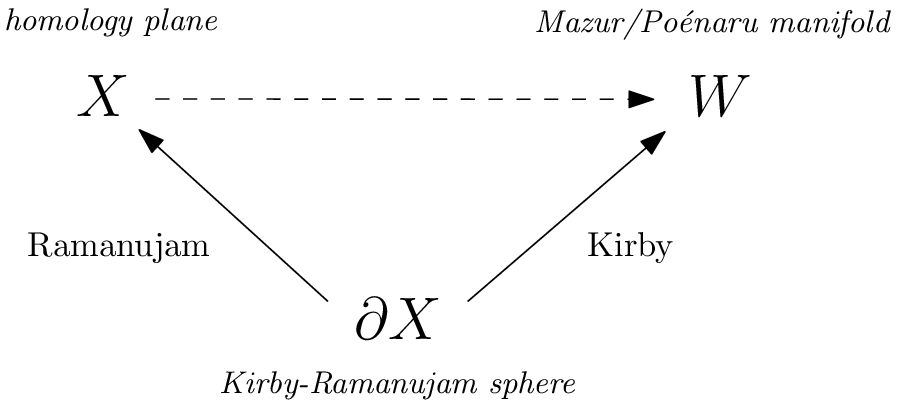}
\end{center}
\end{minipage}}

\vskip\baselineskip

Our definition also fits in complex geometry. Note that homology planes are always affine \cite{Fuj82}, hence they are all Stein \cite[Chapter~VI.3]{H70}. Further, smooth Stein $4$-manifolds have handle decompositions of index $\leq 2$ \cite{E90, G98}. Here, our choice of contractible $4$-manifolds is not random, we study such $4$-manifolds constructed directly by using ribbon knots.

After the ground-breaking work of Ramanujam, several novel techniques for the constructions of homology planes appeared in the works of Gurjar and Miyanishi \cite{GM88}, tom Dieck and Petrie \cite{DP89, DP93}, and Zaidenberg \cite{Za93}. We use their constructions for the existence of our homology spheres and we present the first new examples after Kirby and Ramanujam.

\begin{introthm} 
\label{resolution}
All dual graphs are depicted in Figure~\ref{fig:all}.
\begin{itemize}[leftmargin=2em]

\item Let $X(n)$ be the dual graphs of tom Dieck-Petrie homology planes of log-Kodaira dimension $2$. Then $\partial X(n)$ bound Mazur manifolds with one $0$-handle, one $1$-handle and one $2$-handle.

\item Let $Y(n)$ be the dual graphs of Zaidenberg homology planes of log-Kodaira dimension $2$. Then $\partial Y(n)$ bound Po\'enaru manifolds with one $0$-handle, two $1$-handles and two $2$-handles.

\item Let $Z(n)$ be the dual graphs of Gurjar-Miyanishi homology planes of log-Kodaira dimension $1$. Then $\partial Z(n)$ bound Po\'enaru manifolds with one $0$-handle, $n+1$ $1$-handles and $n+1$ $2$-handles.
\end{itemize}

Therefore, $\partial X(n)$, $\partial Y(n)$, and $\partial Z(n)$ are all Kirby-Ramanujam spheres.

\end{introthm}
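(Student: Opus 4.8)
The plan is to handle all three families through one common pipeline, differing only in the target handle decomposition. From each dual graph in Figure~\ref{fig:all} I would first read off a plumbing description of the boundary $3$-manifold, convert the weighted plumbing tree into a framed link in $S^3$, and then run Kirby calculus to collapse this link to a minimal handle picture. The permissible moves are blow-downs of $(\pm1)$-framed unknots, handle slides, isotopies, and cancellation of a dotted unknot against a $0$-framed meridian. Throughout, the homology is under control for free: since a homology plane is acyclic, its link at infinity $\partial X(n)$ (respectively $\partial Y(n)$, $\partial Z(n)$) is an integral homology $3$-sphere, so the only homological requirement on the filling is that the linking matrix of the $2$-handles over the $1$-handles be unimodular. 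Contractibility then reduces to checking $\pi_1 = 1$ from the Wirtinger-type relations read off the final diagram, after which Whitehead's theorem applies.

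For $X(n)$ the target is a Mazur manifold with one handle of each index $0$, $1$, $2$. After converting the tom Dieck--Petrie dual graph to a surgery diagram, I would successively blow down the $(-1)$- and $(+1)$-framed vertices lining the linear chains of the plumbing, driving all the combinatorial complexity into the framing and linking of a single surviving strand. The goal configuration is one dotted circle clasping one framed knot in the classical Mazur position; unimodularity forces the $1\times1$ linking datum to be $\pm1$, so $H_* = 0$, and the single relation kills $\pi_1$. Because the $2$-handle is then attached along a knot bounding an evident ribbon disk once the $1$-handle is filled, this reduction simultaneously exhibits the manifold in Mazur form.

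For $Y(n)$ and $Z(n)$ the target is a Po\'enaru manifold, namely a ribbon-disk exterior of $B^4$ with one extra $2$-handle attached; equivalently, a balanced handle picture with equal numbers of $1$- and $2$-handles (two each for $Y(n)$, and $n+1$ each for $Z(n)$), so that the Euler characteristic is $1$. The reduction proceeds as above, but it must be organized \emph{uniformly in} $n$: instead of simplifying a fixed diagram I would set up an induction, or a family of parallel blow-down sequences indexed by $n$, that collapses the $n$-dependent subchains while leaving behind exactly two dotted circles with their paired $2$-handles in the Zaidenberg case and exactly $n+1$ in the Gurjar--Miyanishi case. The decisive point is to recognize, after setting aside the single distinguished $2$-handle, that the remaining $1$- and $2$-handles present the exterior of a ribbon disk in $B^4$; only then does the manifold land in the Po\'enaru class rather than merely among contractible fillings.

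The step I expect to be the main obstacle is precisely this parametric Kirby calculus. For $Z(n)$ the handle count grows linearly with $n$, so no finite collection of case-by-case simplifications suffices; one must exhibit a single pattern of blow-downs and slides valid for all $n$ and arrive at a diagram whose linking matrix is visibly $(n+1)\times(n+1)$ and unimodular. Tracking framings and the creation and cancellation of clasps across such a long, $n$-dependent sequence of moves is delicate, and the ribbon condition imposes a genuinely topological constraint on top of the bookkeeping. Once the three handle pictures are in standard form and contractibility is confirmed, the final statement follows at once: each boundary bounds a homology plane by construction and a Mazur or Po\'enaru manifold by the reduction, hence is a Kirby--Ramanujam sphere by definition.
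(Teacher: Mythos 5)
Your overall frame---turn each dual graph into a framed link and simplify by Kirby moves---is where the paper starts too, but the engine of the actual proof is different, and the step you yourself flag as ``the main obstacle'' is exactly the one your plan has no mechanism for. The paper never tries to collapse the plumbing diagram into a dotted-circle/2-handle picture and then certify the ribbon condition afterwards. Instead it attaches one auxiliary $(-1)$-framed $2$-handle to the surgery diagram of the boundary and blows down until the result is recognizably $S^3_0(K)$ for an explicit ribbon knot $K$: the unknot for $X(1)$, the square knot $T(2,3)\#\overline{T(2,3)}$ for $Y(1)$, and the generalized square knot $T(n+1,n+2)\#\overline{T(n+1,n+2)}$ for $Z(n)$ (indeed $\partial Z(n)\approx S^3_{-1}$ of that knot). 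Reversing the surgery exhibits each boundary as integral surgery on a knot in $S^3_0(K)$, and Lemma~\ref{mazur} then hands over the Mazur or Po\'enaru manifold with the advertised handle count, the numbers of $1$- and $2$-handles being governed by the fusion number of $K$ ($0$, $1$, and $n$ respectively). That lemma is what guarantees the ribbon-disk-exterior structure; in your version, ``recognize that the remaining handles present the exterior of a ribbon disk'' is named as the decisive point but no route to it is offered, and a balanced contractible handlebody need not be of Po\'enaru type.

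Two further gaps. First, uniformity in $n$: the paper does not run an open-ended induction on blow-down sequences; for $X(n)$ and $Y(n)$ it uses the Akbulut--Larson trick (a single blow-up plus isotopy relating the diagram of $\partial X(n)$ to that of $\partial X(n+1)$), and for $Z(n)$ it gives one explicit $n$-parametric blow-down computation landing on $S^3_{-1}(T(n+1,n+2)\#\overline{T(n+1,n+2)})$; without some such device your ``family of parallel blow-down sequences'' is a placeholder, not an argument. Second, you take ``each boundary bounds a homology plane by construction'' for granted, whereas a substantial part of the paper's proof verifies that the graphs of Figure~\ref{fig:all} really are dual graphs of tom Dieck--Petrie, Zaidenberg, and Gurjar--Miyanishi planes with the stated log-Kodaira dimensions, via the continued-fraction computations of Lemmas~\ref{lem:Partial1}, \ref{lem:CalcFraction}, and~\ref{tdpfrac}. (Minor points: blow-downs alone never create a dotted circle---you must reach a $0$-framed unknot and trade it---and for the $Y$ and $Z$ families $H_1=0$ does not imply $\pi_1=1$, so unimodularity does not finish contractibility; both issues are absorbed by Lemma~\ref{mazur} in the paper's route.)
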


\begin{figure}[htbp]
\begin{center}
\includegraphics[width=0.65\columnwidth]{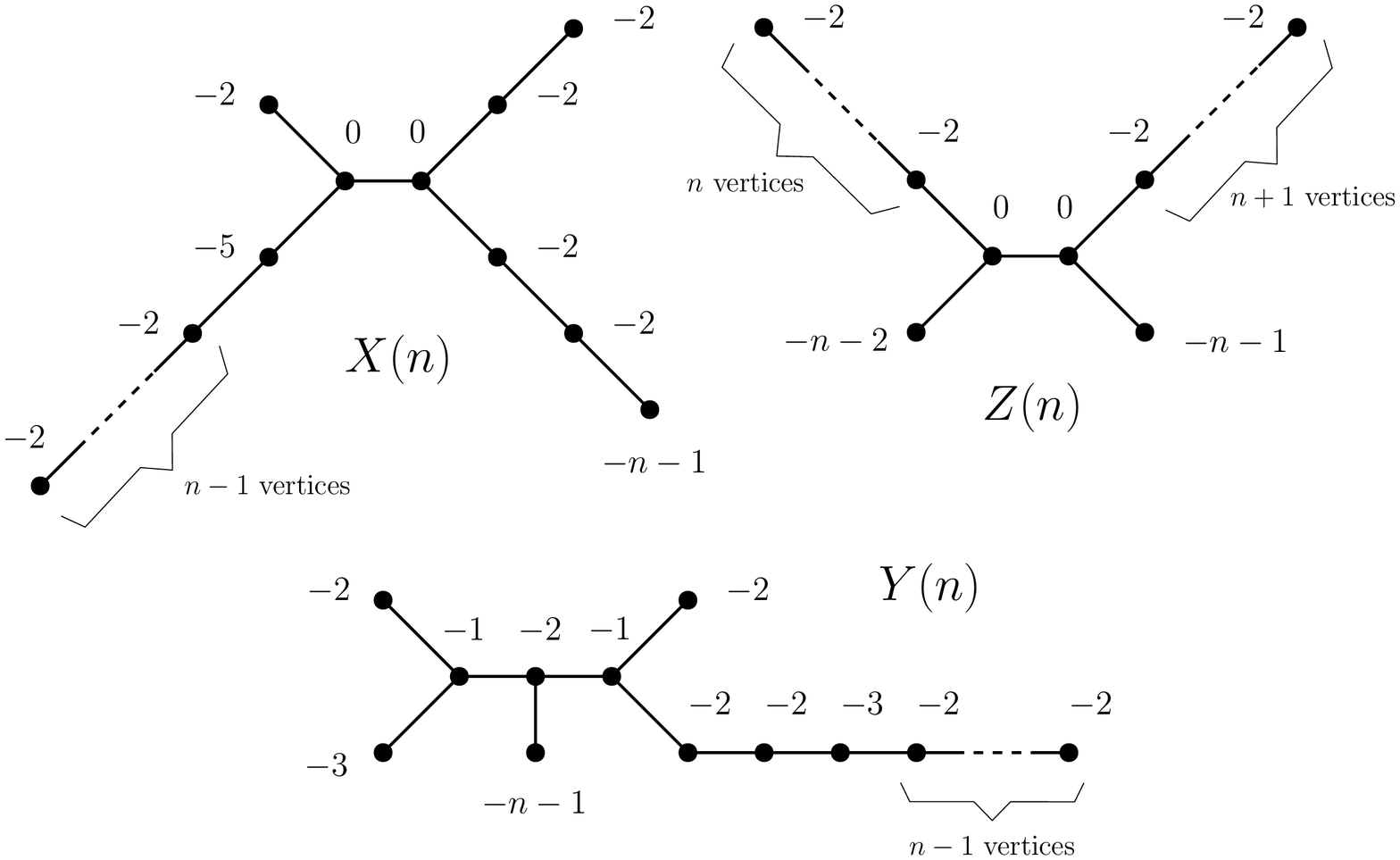}
\end{center}
\caption{The dual graphs of homology planes $X(n)$, $Y(n)$, and $Z(n)$.}
\label{fig:all}
\end{figure} 

The generalization of the Ramanujam surface shown in Figure~\ref{fig:Ramanujam} appeared in \cite{OS20}, and the second author proved that $\partial W(n)$ bound Po\'enaru manifolds with one $0$-handle, two $1$-handles, and two $2$-handles. During the course of this paper, we also investigate that $W(n)$ originate from the homology planes for every $n \geq 1$, see Proposition~\ref{savk}. Therefore, they also provide examples of Kirby-Ramanujam spheres. Since our all Kirby-Ramanujam spheres bound contractible $4$-manifolds, they are homology cobordant to the $3$-sphere $S^3$; and therefore, they represent the trivial element in the homology cobordism group $\Theta^3_\mathbb{Z}$, see survey articles \cite{M18, S22}. 

The algebraic complexity of the structure of $\Theta^3_\mathbb{Z}$ has been always studied by using \emph{Brieskorn spheres} $\Sigma(p,q,r)$, which are the links of the complex surface singularities $x^p +y^q +z^r =0$ in the complex space $\mathbb{C}^3$ where $p,q$, and $r$ are pairwise relatively prime integers. Their links are \emph{Seifert fibered spheres} over the $2$-sphere with three singular fibers having the multiplicities $p,q$, and $r$. In \cite{DHST18}, Dai, Hom, Stoffregen, and Truong provided the first family of Brieskorn spheres generating an infinite rank summand $\mathbb{Z}^\infty$ in $\Theta^3_\mathbb{Z}$. Recently, even more families were found by Karakurt and the second author \cite{KS22}.

Next, we show that one of our families of Kirby-Ramanujam spheres comes from the splice of two resolutions of Brieskorn singularities. Note that the $n=1$ case corresponds to the splice of the Poincar\'e homology sphere $\Sigma(2,3,5)$ and $\Sigma(2,3,7)$ along their singular fibers of degree $5$ and $7$.

\begin{introthm} 
\label{splice}

Let $K_1 (n)=K(n^2 +3n +1)$ and $K_2 (n)=K(n^2 +3n +3)$ denote the singular fibers of Brieskorn spheres $\Sigma_1 (n) = \Sigma(n+1, n+2, n^2 +3n +1)$ and $\Sigma_2 (n) = \Sigma(n+1, n+2, n^2 +3n +3)$ respectively. Then there is a diffeomorphism between Kirby-Ramanujam spheres $\partial Z(n)$ and the splice of $\Sigma_1 (n)$ and $\Sigma_2 (n)$ along $K_1 (n)$ and $K_2 (n)$: $$ \partial Z(n) \approx \Sigma_1 (n) \tensor[_{K_1 (n)}]{\bowtie}{_{K_2 (n)}} \Sigma_2 (n) \ \ \ \text{for each} \ n \geq 1.$$
\end{introthm}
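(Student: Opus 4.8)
The plan is to realize both sides as graph manifolds and to compare them through the Eisenbud--Neumann theory of splice diagrams, using Neumann's plumbing calculus as the computational engine. The boundary $\partial Z(n)$ is the link of the plumbing tree drawn in Figure~\ref{fig:all}, and the splice of two Seifert fibered homology spheres along fibers is again a graph manifold; each side is therefore determined, up to orientation-preserving diffeomorphism, by its minimal splice diagram (equivalently, by its minimal good plumbing graph). The whole argument reduces to computing the splice diagram of each side and checking that the two coincide.

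First I would record the splice diagram of the right-hand side directly from the splicing construction. A Brieskorn sphere $\Sigma(a,b,c)$ has as its splice diagram a single node with three leaf-edges weighted $a$, $b$, $c$, and its exceptional fiber of multiplicity $c$ is exactly the leaf weighted $c$. Splicing $\Sigma_1(n)$ and $\Sigma_2(n)$ along $K_1(n)$ and $K_2(n)$ thus amounts to deleting these two leaves and joining the two central nodes by a single internal edge. The resulting diagram has two nodes $v_1, v_2$: at $v_1$ the leaf-weights are $n+1, n+2$ and the weight toward $v_2$ is $n^2+3n+1$; at $v_2$ the leaf-weights are $n+1, n+2$ and the weight toward $v_1$ is $n^2+3n+3$. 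The identities $n^2+3n+1=(n+1)(n+2)-1$ and $n^2+3n+3=(n+1)(n+2)+1$ make the three weights at each node pairwise coprime, confirming that this is a legitimate integral homology-sphere splice diagram.

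Next I would convert the dual graph of $Z(n)$ into a splice diagram. I would first normalize the plumbing tree to minimal good form via Neumann's moves (blow-ups, blow-downs, and the standard reductions), and then read off the splice data by the usual rule: the splice nodes are the branch vertices of the tree, and the splice weight on an edge leaving a node equals the absolute value of the determinant of the intersection matrix of the plumbing subtree lying beyond that edge. The goal is to show that this produces precisely a two-node diagram with leaf-weights $n+1, n+2, n+1, n+2$ and internal edge-weights $n^2+3n+1$ and $n^2+3n+3$, matching the diagram of the previous step.

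The main obstacle is this determinant computation carried out uniformly in $n$: the chains in the dual graph of $Z(n)$ have lengths and weights that vary with $n$, so the continued fractions producing the arm-weights must be evaluated as functions of $n$ and shown to collapse to the stated closed forms. The most delicate values are the two internal edge-weights, where one must verify that the determinants in the connecting direction are exactly $(n+1)(n+2)\mp 1$; this is precisely where the symmetric placement of the two Brieskorn fibers about $(n+1)(n+2)$ is forced by the combinatorics of $Z(n)$. Once the two splice diagrams agree, the Eisenbud--Neumann classification yields an orientation-preserving diffeomorphism $\partial Z(n) \approx \Sigma_1(n)\bowtie\Sigma_2(n)$; a final comparison of the canonical orientations on the two sides then completes the proof for every $n\ge 1$.
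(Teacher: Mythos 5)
Your approach is sound but genuinely different from the one in the paper. The paper never passes through splice diagrams as invariants: it first shows (in the proof of Theorem~\ref{resolution}, via blow-ups and blow-downs on the plumbing tree of $Z(n)$) that $\partial Z(n) \approx S^3_{-1}\bigl(T(n+1,n+2) \# \overline{T(n+1,n+2)}\bigr)$, then builds the joint plumbing graph of $\Sigma_1(n) \bowtie \Sigma_2(n)$ by the Eisenbud--Neumann recipe and reduces \emph{it} by Kirby calculus to the same surgery description --- carried out explicitly only for $n=1$, with the general case left to the reader (a remark also sketches an alternative via the Fukuhara--Maruyama surgery formula for splices). You instead invoke the Eisenbud--Neumann classification of graph homology spheres by normal-form splice diagrams and propose to compute the diagram of each side. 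What your route buys is uniformity in $n$: the arm-weights of $Z(n)$ are determinants of subtrees that can be evaluated as closed-form functions of $n$, so there is no ``base case plus exercise'' structure, and your diagram for the right-hand side (two nodes, leaf-weights $n+1, n+2$ at each, internal weights $(n+1)(n+2)\mp 1$, edge determinant $-1$, no weight-one leaves, hence already in normal form) is correct. What it costs is the intermediate identification with $(-1)$-surgery on the generalized square knot, which the paper exploits elsewhere (it is how $\partial Z(n)$ is shown to bound a Po\'enaru manifold, and it connects to Gordon's theorem). The one substantive debt in your writeup is that the determinant computation for the $Z(n)$ tree --- the verification that its splice diagram has exactly two nodes with the stated weights --- is announced but not performed; since the paper's own general-$n$ argument is likewise deferred, this is a comparable level of incompleteness, but it is the entire content of the theorem and must actually be carried out, including the orientation check you flag at the end.
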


From perspectives of both algebraic geometry and low-dimensional topology, the result presented above is to be considered an unexpected novelty due to the following reasons:

\begin{itemize}[leftmargin=2em]
\item Connections between low-dimensional topology and algebraic geometry are classically made through singularity theory, see for example N\'{e}methi's book \cite{N22}. Since their intersection matrices are not negative-definite, the dual graphs in Figure~\ref{fig:all} do not originate from the resolution of singularities in normal surfaces, see Artin's article \cite[Proposition~2]{A66}. On the other hand, there are rational homology planes that arise from complex normal surface singularities, see Wahl's paper \cite{W21}.

\item Brieskorn spheres may bound Mazur and Po\'enaru manifolds, see \cite{S20} and references therein. However, one cannot realize any Brieskorn sphere as a boundary of a homology plane due to Orevkov \cite{O97}.

\item The Brieskorn spheres $\Sigma_1 (n)$ and $\Sigma_2 (n)$ uniquely bound negative-definite resolution dual graphs, shown in Figure~\ref{fig:torussurgeries}. Using the algorithm of Neumann and Raymond \cite[Section~7]{NR78}, one can easily compute the Neumann-Siebenmann invariants of $\Sigma_2 (n)$ as follows: \[
\bar{\mu} \left (\Sigma_2 (n) \right ) = \begin{cases} 
\frac{ \frac{n+1}{2} \cdot \left ( \frac{n+1}{2} +1 \right) }{2} = \frac{n^2 + 4n +3}{8} \CommaPunct & \text{if} \ n \geq 1 \ \text{odd}, \\
\frac{ \frac{n}{2} \cdot \left ( \frac{n}{2} +1 \right) }{2} = \frac{n^2 + 2n}{8} \CommaPunct & \text{if} \ n \geq 2 \ \text{even}. 
\end{cases} \]

\noindent Since $\bar{\mu}$ is splice additive due to Saveliev \cite[Theorem~1]{Sav95}, we say that $\bar{\mu} \left (\Sigma_1 (n) \right ) = -\bar{\mu} \left (\Sigma_2 (n) \right )$ by using Theorem~\ref{resolution} and Theorem~\ref{splice}. Thus, $\Sigma_1 (n) $ and $\Sigma_2 (n)$ are homology cobordant to neither $S^3$ nor each other for each values of $n \geq 1$ because $\bar{\mu}$ is a homology cobordism invariant for plumbings, see Saveliev's article \cite{Sav02b}.

\item Furuta's gauge theoretic argument \cite{F90} guarantees that $\{ \Sigma_1 (n) \}_{n=1}^\infty$ are linearly independent in $\Theta^3_\mathbb{Z}$ since their Fintushel-Stern $R$-invariant is positive \cite{FS85, NZ85}. Relying on the computations \cite{Nem07} arising from N\'{e}methi's lattice homology \cite{AN05}, we know that $\Sigma_2 (n)$ have vanishing Ozsv\'ath-Szab\'o $d$-invariants \cite{OS03b} and they are of projective type. For Brieskorn spheres, the involutive $d$-invariants satisfy the equalities: $\bar{d} = d$ and $\underline{d} = -2\bar{\mu}$ \cite{DM19}. Since the difference between involutive $d$-invariants is arbitrarily large, we can apply the Floer theoretic linear independence argument of Dai and Manolescu \cite{DM19} to see that $\{ \Sigma_2 (n)  \}_{n=1}^\infty$ also generate a $\mathbb{Z}^\infty$ subgroup in $\Theta^3_\mathbb{Z}$.

\item One can extract monotone graded subroots \cite{DM19} from the graded roots \cite{AN05} by using the recipe in \cite[Section~6]{DM19}. N\'{e}methi's computations \cite{Nem07} yield that $\Sigma_2 (n)$ have complicated monotone graded subroots, depicted in Figure~\ref{fig:monotone}. Let $\Sigma'_{2} (n)$ and $\Sigma''_{2} (n)$ denote the subfamilies of $\Sigma_2 (n)$ with respect to the odd and even values of $n$, respectively. Even though their gradings are different, they have very similar monotone graded subroots as the family of Dai, Hom, Stoffregen, and Truong. By mimicking their calculations, one can compute their invariants to conclude that $\{ \Sigma'_2 (n)  \}_{n=1}^\infty$ and $\{ \Sigma''_2 (n) \}_{n=2}^\infty$ both generate $\mathbb{Z}^\infty$ summands in $\Theta^3_\mathbb{Z}$. However, we are not able to distinguish the homology cobordism classes of $\Sigma'_{2} (n)$ and $\Sigma''_{2} (n)$.  For our first family $\Sigma_1 (n)$, the corresponding monotone graded subroots are trivial due to Tweedy's calculations \cite{T13} since $\underline{d} \left (\Sigma_1 (n) \right ) = \bar{d} \left (\Sigma_1 (n) \right )$. Therefore, we cannot address the invariants of Dai, Hom, Stoffregen, and Truong.
\end{itemize}

\begin{figure}[htbp]
\begin{center}
\includegraphics[width=0.55\columnwidth]{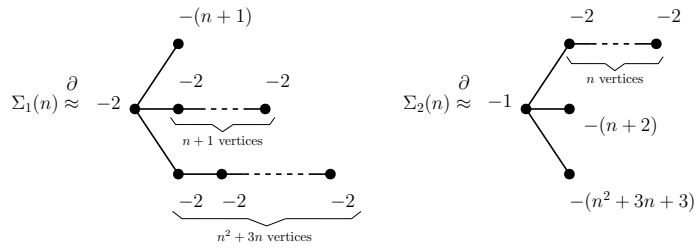}
\end{center}
\caption{The resolution dual graphs for $\Sigma_1 (n)$ and $\Sigma_2 (n)$.}
\label{fig:torussurgeries}
\end{figure}

\begin{figure}[htbp]
\begin{center}
\includegraphics[width=0.55\columnwidth]{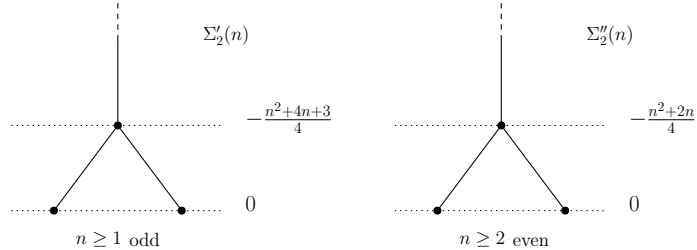}
\end{center}
\caption{The monotone graded subroots of $\Sigma_2 (n)$.}
\label{fig:monotone}
\end{figure}

In \cite[Section~6]{Sav98}, Saveliev computed instanton Floer homology of surgeries along the generalized square knots. His result includes our family $ \Sigma_1 (n) \tensor[_{K_1 (n)}]{\bowtie}{_{K_2 (n)}} \Sigma_2 (n) $ as well. He also described the representation spaces of irreducible representations of the fundamental groups of splices in $\SU (2)$ in \cite[Section~5]{Sav98}. Our other Kirby-Ramanujam spheres a priori have the splice decompositions of some Brieskorn spheres, but we do not know currently what they are. After finding these components, one can study these objects by following Saveliev's strategies or decipher their homology cobordism classes by using our approaches presented in the paper. 

\subsection*{Organization} In Section \ref{ag}, we review the constructions of tom Dieck-Petri, Zaidenberg, and Guryar-Miyanishi types of homology planes. In Section \ref{ldt}, we provide a background for the Mazur and Po\'enaru manifolds, and the splicing of homology spheres. We present the proofs of Theorem \ref{resolution} and Theorem \ref{splice} together in Section \ref{proofs}. Finally, we discuss some further research topics and list several open problems in Section~\ref{further}.

\subsection*{Acknowledgements}
We are indebted to Alexander I. Suciu for his initiative in bringing us as collaborators. We are very grateful to Marco Golla, Nikolai Saveliev, and Mikhail G. Zaidenberg for sharing their expertise with us and providing insightful suggestions. We would also like to thank Louis Funar and Andras N\'emethi for their valuable comments. Finally, we would like to thank the anonymous referee for their careful assessment and invaluable feedback.

This project started when O\c{S} visited the Max-Planck-Instit\"ut f\"ur Mathematik in Bonn, we want to thank them for their hospitality and support. RAA was supported by the Ministry of Education and Science of the Republic of Bulgaria through the Scientific Program \say{Enhancing the Research Capacity in Mathematical Sciences (PIKOM)}. O\c{S} was supported by the Turkish Fulbright Commission \say{Ph.D. dissertation research grant}. O\c{S} thanks his advisor \c{C}a\u{g}r{\i} Karakurt and his mentor Ciprian Manolescu for their constant guidance and support during the course of writing this article.

\section{Preliminaries in Algebraic Geometry}
\label{ag} 

Consider a curve $D=D_1\cup\ldots\cup D_i$ in a projective smooth surface $\bar{Y}$ with the irreducible components $D_j$ of $D$. We say that the curve $D$ is \emph{simple normal crossing} or \emph{SNC} for short if each $D_j$ is smooth and around every point $p\in D$ there exists local complex coordinates $(z_1,z_2)$ such that $D=\{z_1z_2=0\}$ or $D=\{z_1=0\}$. For any SNC curve $D$, we can associate a \emph{dual graph} $\Delta_D$ as follows:

\begin{itemize}[leftmargin=2em]
\item For every irreducible component $D_j$,  we have a vertex $v_j$,
\item For every point in $D_k\cap D_l$, we have an edge connecting $v_k$ and $v_l$,
\item Every curve $D_j$ has a self-intersection number $D_j^2$ corresponding to weight of the vertex $v_j$.
\end{itemize}

The weights constitute the diagonal entries of the intersection matrix of $\Delta_D$. For the rest, we write $1$ if there is an edge connecting vertices and we assign $0$ otherwise. 

An algebraic complex smooth quasi-projective surface $X$ is called a \emph{homology plane} if it has the same of homology groups of $\mathbb{C}^2$ in integer coefficients, i.e., $H_* (X; \mathbb{Z}) = H_* (\mathbb{C}^2; \mathbb{Z})$. By definition of quasi-projectivity, we have that $X$ is a locally closed subset of the complex projective space $\mathbb{CP}^n$. By taking its closure inside $\mathbb{CP}^n$ we find a projective variety $\bar{X}$. By blowing up points, this compactification $\bar{X}$ can be chosen to be a smooth projective surface such that $D=\bar{X}\setminus X$ is an SNC curve. 

Moreover, for a homology plane $X$, there always exists a compactification $\bar{X}$ such that the dual graph $\Delta_D$ for $D=\bar{X}\setminus X$ is \emph{absolutely minimal}, in the sense that the weight of any of at most linear vertex (i.e. linear or end) of $\Delta_D$ does not exceed $-2$. See \cite[Appendix~A.2]{Za93} for details. Such a dual graph determines uniquely the homology plane $X$, consult \cite[Remark~A.2.5]{Za93}. Therefore, we will use $X$ to denote both the homology plane and its absolutely minimal dual graph. 

The SNC curve $D$ is of a very special type due to the following folklore result, for a proof see \cite[Proposition 2.1]{DP93}.

\begin{pro}
Let $X$ be a homology plane and let $D=D_1\cup \ldots \cup D_i=\bar{X}\setminus X$ be an SNC curve. Then the irreducible components $D_j$ of $D$ are isomorphic to $\mathbb{CP}^1$. Moreover, the dual graph $\Delta_D$ is a tree, i.e., it is connected and has no cycles.
\end{pro}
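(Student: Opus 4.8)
The plan is to reduce the statement to a homological computation on the boundary of a regular neighborhood of $D$. Fix a closed regular neighborhood $N$ of the SNC curve $D$ inside the smooth projective surface $\bar X$. Because the components $D_j$ are smooth and meet transversally, $N$ is diffeomorphic to the plumbing $4$-manifold built from the dual graph $\Delta_D$: over each vertex $v_j$ we take the disk bundle of Euler number $D_j^2$ over the closed Riemann surface $D_j$ of some genus $g_j$, and we plumb two such bundles at every edge. Writing $W:=\bar X\setminus \mathring N$, the open surface $X=\bar X\setminus D$ deformation retracts onto $W$ via the collar $\mathring N\setminus D\cong \partial N\times[0,1)$; hence $H_*(W;\mathbb Z)\cong H_*(X;\mathbb Z)=H_*(\mathrm{pt};\mathbb Z)$, so $W$ is a homology $4$-ball with boundary $M:=\partial N=\partial W$. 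The whole point is that all of the asserted structure of $\Delta_D$ will be forced by the single fact that $M$ is a homology sphere.

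First I would record that $M$ is indeed a $\mathbb Z$-homology sphere. From the long exact sequence of $(W,M)$ together with Lefschetz duality $H_k(W,M)\cong H^{4-k}(W)$ one sees that $\widetilde H_0(M)$ is a quotient of $H_1(W,M)\cong H^3(W)=0$, and that $H_1(M;\mathbb Q)$ is a quotient of $H_2(W,M;\mathbb Q)\cong H^2(W;\mathbb Q)=0$; thus $M$ is connected with $b_1(M)=0$. Since $M$ is the disjoint union of the plumbed manifolds of the connected components of $\Delta_D$, each of which is connected, the connectedness of $M$ immediately yields the connectedness of $\Delta_D$. Next I would compute $b_1(M;\mathbb Q)$ directly from the plumbing. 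As $N$ deformation retracts onto the curve $D$, a Mayer--Vietoris computation along the dual graph gives $H_1(N;\mathbb Q)\cong H_1(D;\mathbb Q)\cong \mathbb Q^{\,2\sum_j g_j+b_1(\Delta_D)}$ and $H_2(N;\mathbb Q)\cong H_2(D;\mathbb Q)\cong \mathbb Q^{V}$, where $V$ is the number of vertices. Feeding these into the rational long exact sequence of $(N,M)$, in which the map $H_2(N)\to H_2(N,M)\cong H^2(N)\cong H_2(N)^{\vee}$ is the intersection matrix $A=(D_k\cdot D_l)$ of $\Delta_D$ and $H_1(N,M)\cong H^3(N)=0$, exactness produces
\[
 b_1(M)=\dim\operatorname{coker}(A)+\dim H_1(N)=\operatorname{nullity}(A)+2\sum_j g_j+b_1(\Delta_D).
\]

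Because the three summands are non-negative and $b_1(M)=0$, each must vanish. Thus $\sum_j g_j=0$, forcing every $D_j$ to be a smooth genus-zero projective curve, i.e. $D_j\cong\mathbb{CP}^1$; and $b_1(\Delta_D)=0$, so $\Delta_D$ has no cycles. Combined with the connectedness established above, $\Delta_D$ is a tree, and as a byproduct $\operatorname{nullity}(A)=0$ shows the intersection matrix is nondegenerate. I expect the main obstacle to be the careful bookkeeping rather than any deep input: identifying $N$ with the plumbing on $\Delta_D$ carrying the correct genera and framings, justifying the deformation retraction $X\simeq W$ and the Lefschetz-duality identifications, and confirming the Mayer--Vietoris ranks, so that the homology-ball property transfers honestly into the single constraint $b_1(M)=0$. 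An alternative route, staying inside algebraic geometry, would run the Gysin (residue) sequence for $X=\bar X\setminus D$ in mixed Hodge theory and read off $H^1(X)$ and $H^2(X)$ directly; but that approach requires control of $H^*(\bar X)$ (in particular $b_1(\bar X)=0$), whereas the plumbing argument extracts everything from $X$ alone and is therefore the more self-contained plan.
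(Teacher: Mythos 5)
Your argument is correct. Note that the paper does not prove this proposition itself; it labels it folklore and cites tom Dieck--Petrie \cite[Proposition 2.1]{DP93}. The standard proof there runs the duality upstairs in the closed surface: Alexander--Lefschetz duality gives $H^k(\bar X,D)\cong H_{4-k}(X)=H_{4-k}(\mathrm{pt})$, so the long exact sequence of the pair $(\bar X,D)$ identifies $H^k(D)$ with $H^k(\bar X)$ for $k\le 2$, and one then checks $b_1(\bar X)=0$ and $b_0(\bar X)=1$ to kill $2\sum_j g_j+b_1(\Delta_D)$ and force connectedness. You instead push the same duality down to the boundary $3$-manifold $M=\partial N$ and read the constraints off the plumbing presentation; the bookkeeping steps you flag (the retraction $X\simeq W$, the identification of the connecting map with the intersection matrix $A$, the Mayer--Vietoris ranks for $H_*(D)$) are all standard and go through. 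Your route has the advantage of being self-contained in $X$ (no control of $H^*(\bar X)$ needed) and of yielding two byproducts the paper uses immediately afterwards: that $\partial U$ is a homology sphere and that the intersection matrix of $\Delta_D$ is nondegenerate (indeed unimodular, if you rerun your $(N,M)$ sequence with $\mathbb Z$ coefficients). The only point worth making explicit is that $D\neq\emptyset$, which holds because a homology plane is affine, hence noncompact.
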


Therefore, there exists a compact regular tubular neighborhood $U$ of $D$ in $\bar{X}$ and its boundary $\partial U$ is a \emph{plumbed homology sphere} with the plumbing graph $\Delta_D$. By abusing of language, we also call $\partial U$ \emph{the boundary of the homology plane} $X$. 

\subsection{The Classification of Homology Planes}

The partial classification of homology planes follows the lines of classification of open smooth algebraic surfaces. We refer to the book of Miyanishi \cite{MM01} for an overview of this classification. A main ingredient is the invariant called the logarithmic Kodaira dimension.

Let $X$ be a homology plane and $\bar{X}$ a smooth projective compactification with an SNC $D=\bar{X}\setminus X$. Define its \emph{logarithmic Kodaira dimension} $\bar{k}(X)\in \{-\infty,0,1,2\}$ as the unique value that satisfies the following inequalities: for some positive constants $\alpha$ and $\beta$ we have
$$\alpha m^{\bar{k}(X)}\leq \dim H^0(\bar{X},m(K_{\bar{X}}+D) )\leq \beta m^{\bar{k}(X)}$$ where $m$ is sufficiently large and divisible positive integer. Here, $H^0$ stands for the sheaf cohomology and $K_{\bar{X}}$ denotes the canonical divisor of $\bar{X}$. The invariant $\bar{k}(X)$ does not depend on the chosen compactification $\bar{X}$, see \cite[Chapter~11.1]{I77}.

We know that $\bar{k}(X)=-\infty$ if and only if $X\cong \mathbb{C}^2$ as algebraic varieties, see the articles of Fujita \cite{Fuj79} and Miyanishi and Sugie \cite{MS80}. Fujita also prove that if $X$ is not isomorphic to $\mathbb{C}^2$ then $\bar{k}(X) \geq 1$ \cite{Fuj82}. Furthermore, the homology planes with $\bar{k}(X) = 1$ are completely classified by Gurjar and Miyanishi in \cite{GM88}. Also, there are explicit constructions of homology planes with $\bar{k}(X) = 2$ due to tom Dieck and Petrie \cite{DP93} and Zaidenberg \cite{Za93, Za94}.

In order to construct infinite families of homology planes, we first define an operation on a dual graph $\Delta_D$ of an SNC curve $D$ in a projective surface $\bar{Y}$ called \emph{expanding an edge}.

Let $v_k$ and $v_l$ be two vertices of $\Delta_D$ and let $e$ be an edge joining them. Fix two coprime positive integers $a$ and $b$. We use the following short-hand notation for the continued fraction expansion: \begin{equation*}
[d_s, d_{s-1} \ldots,d_1]=d_s-\cfrac{1}{d_{s-1}-\cfrac{1}{\cdots-\cfrac{1}{d_1}}} \ \cdot
\end{equation*}

Now set $$ \frac{a+b}{b}=[c_{-r},\ldots,c_{-1}] \ \ \ \text{and} \ \ \ \frac{a+b}{a}=[c_s,\ldots,c_1]. $$ Replace the edge $e$ by a linear graph divided in three linear subgraphs with $r-1, 1$ and $s-1$ vertices respectively: $$\Delta_{a,b}=\{E_{-r+1},\ldots,E_{-1}\},\quad  E_0, \quad \Delta_{a,b}'=\{E_1,\ldots, E_{s-1}\}$$ with weights:
$$E_h^2=\left\lbrace \begin{array}{rl}
-c_h , & \text{if } h\not =0 ,\\
-1 , & \text{if }  h=0 ,
\end{array} \right.$$
and we modify the weights of $v_k$ and $v_l$ to $D_k^2-c_{-r}+1$ and $D_l^2-c_s+1$ respectively, see Figure~\ref{fig:Expanding}.

\begin{figure}[htbp]
\begin{center}
\includegraphics[scale=1]{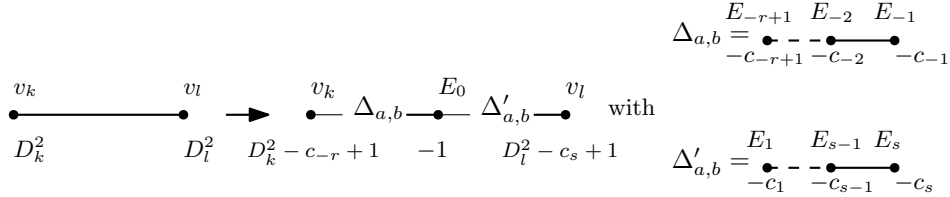}
\end{center}
\caption{Expanding an edge.}
\label{fig:Expanding}
\end{figure}

\begin{lem}
\label{lem:Partial1} 
Consider $a=n$ and $b=1$, then we have $$\frac{a+b}{b}=n+1 \quad
\frac{a+b}{a}=[\underbrace{2,2,\dots,2}_{n\text{-times}}] \cdot$$
\end{lem}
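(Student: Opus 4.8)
The first equality requires no work: with $b=1$ we simply have $\frac{a+b}{b} = \frac{n+1}{1} = n+1$. The content of the lemma is the second identity, which asserts that the Hirzebruch--Jung (negative) continued fraction expansion of $\frac{n+1}{n}$ is a string of $n$ twos. I would prove this by induction on $n$.

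To set up the induction, I would first read off from the defining recursion of the bracket notation that peeling off the outermost entry yields
$$[\underbrace{2,\dots,2}_{n}] = 2 - \cfrac{1}{[\underbrace{2,\dots,2}_{n-1}]},$$
so writing $g_n := [\underbrace{2,\dots,2}_{n}]$ we obtain $g_1 = 2$ together with the recurrence $g_n = 2 - 1/g_{n-1}$ for $n \geq 2$. The base case is immediate, since $g_1 = 2 = \frac{1+1}{1}$. For the inductive step I would assume $g_{n-1} = \frac{n}{n-1}$ and compute
$$g_n = 2 - \frac{1}{g_{n-1}} = 2 - \frac{n-1}{n} = \frac{2n-(n-1)}{n} = \frac{n+1}{n},$$
which closes the induction and establishes $\frac{a+b}{a} = \frac{n+1}{n} = [\underbrace{2,\dots,2}_{n}]$ as claimed.

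I do not anticipate any genuine obstacle here; this is the standard fact that the negative continued fraction of $\frac{k+1}{k}$ encodes a chain of $(-2)$-curves. The only point demanding care is to match the indexing convention of the bracket used in the excerpt (which lists the outermost entry $d_s$ first), so that the recurrence peels from the correct end of the expansion. Once that convention is pinned down, the verification is the one-line induction above; alternatively, one could instead invoke the convergent recurrence for negative continued fractions, but the direct induction is cleaner for this all-twos case.
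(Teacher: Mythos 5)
Your proof is correct; the paper states this lemma without proof, treating it as the standard fact that the negative continued fraction of $\frac{n+1}{n}$ is a string of $n$ twos. Your one-line induction via the recurrence $g_n = 2 - 1/g_{n-1}$ is exactly the natural verification, and your remark that the all-twos case makes the peeling convention immaterial is the right observation.
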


\subsection{Gurjar-Miyanishi Homology Planes}
\label{gmhom}

Now we review the following constructions of homology planes appeared in \cite{GM88}. We call the resulting surfaces \emph{Gurjar-Miyanishi homology planes}. They can be obtained in the following fashion.

Suppose that $\Delta_1\subset \Delta_D$ is a cycle, that the vertices $v_k,v_l$ and the edge $e$ are in $\Delta_1$. We call the following procedure \emph{cutting a cycle}: 

\begin{itemize}[leftmargin=2em]
\item Fix two coprime positive integers $a$ and $b$ and expand the edge $e$ as above,
\item Remove the vertex $E_0$ and its two adjacent edges.
\end{itemize}

Consider the union of four lines in the complex projective plane $$L(1,4)  \doteq \cup_{k=1}^4 l_k\subset \mathbb{CP}^2$$ where the first three lines intersect in a point $P$ and the fourth line is in general position. Blow up the point $P$ to obtain an SNC curve $D$. Denote its dual graph by $\Delta_D$. We will cut the cycles in $\Delta_D$ at the edges corresponding to the points $l_1\cap l_4$ and $l_2\cap l_4$ and two pairs of coprime positive integers $a,b$ and $c,d$ such that $ac-ad-bc=\pm 1$. This step is expressed by dashed lines in Figure \ref{fig:GM}. 

\begin{figure}[htbp]
\begin{center}
\includegraphics[scale=.7]{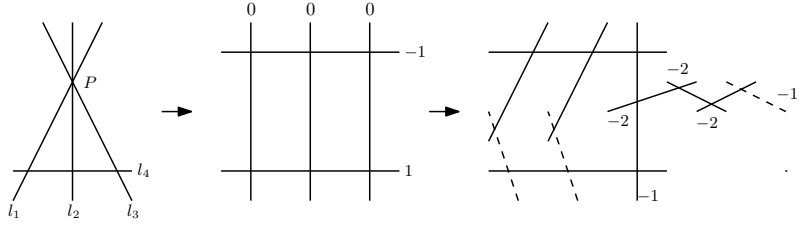}
\end{center}
\caption{The procedure for the construction of Gurjar-Miyanishi homology planes.}
\label{fig:GM}
\end{figure}

We also need to blow up a smooth point in the strict transform of $l_4$, which can be followed by a sequence of $k$ many blow up operations, and each time a smooth point of the exceptional divisor is created by the precedent blow up. This yields a chain of $-2$-curves and one $-1$-curve. We do not consider this $-1$-curve in the divisor $D'$, whose dual graph is shown in Figure~\ref{fig:GM-dg}. The divisor $D'$ lies in a projective smooth surface $\bar{X}$ obtained by sequences of blow ups in $\mathbb{CP}^2$.

\begin{figure}[htbp]
\begin{center}
\includegraphics[scale=0.8]{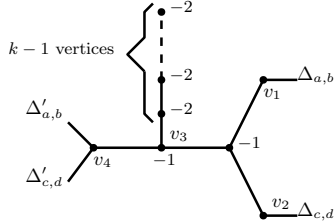}
\end{center}
\caption{The dual graph of Gurjar-Miyanishi homology planes.}
\label{fig:GM-dg}
\end{figure}

The Gurjar-Miyanishi homology plane corresponding to the numerical data $(k,(a,b),(c,d))$ is given by $X \doteq \bar{X}\setminus D'$. If $(a,b)\not =(1,1)\not = (c,d)$, then $X$ has logarithmic Kodaira dimension $1$ and it is contractible. 

\begin{lem}\label{lem:CalcFraction} Let $(a,b)=(n+1,n)$. Then $$\frac{a+b}{a}=[2,n+1], \quad \frac{a+b}{b} = [3,\underbrace{2,2,\dots,2}_{n\text{-times}}].$$
\end{lem}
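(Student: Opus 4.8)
The plan is to verify both identities by unwinding the Hirzebruch--Jung continued fractions directly from the recursive definition, after recording the one arithmetic fact that with $(a,b)=(n+1,n)$ we have $a+b = 2n+1$. The two targets are therefore $\frac{a+b}{a} = \frac{2n+1}{n+1}$ and $\frac{a+b}{b} = \frac{2n+1}{n}$, and the entire task reduces to confirming that the two proposed symbols collapse to these fractions; no geometry enters, only the evaluation rule for $[\,d_s,\ldots,d_1\,]$.

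For the first identity I would simply expand the two-term symbol: by definition $[2,n+1] = 2 - \frac{1}{n+1}$, which rearranges to $\frac{2(n+1)-1}{n+1} = \frac{2n+1}{n+1} = \frac{a+b}{a}$, so this case is immediate and needs no induction. For the second identity the natural move is to peel off the leading entry and reduce to a pure string of $2$'s: I would write $[3,2,\dots,2] = 3 - \frac{1}{[2,\dots,2]}$ and then invoke Lemma~\ref{lem:Partial1}, which already evaluates a block of $k$ consecutive $2$'s as $\frac{k+1}{k}$. Substituting this inner value and simplifying $3 - \frac{k}{k+1} = \frac{2k+3}{k+1}$ then exhibits the answer as a single reduced fraction, which I match against $\frac{2n+1}{n}$.

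The one place that demands genuine care — and the only real obstacle — is the bookkeeping of how many $2$'s follow the leading $3$, since an off-by-one in that count alters the resulting fraction. I would pin the length down by requiring the denominator to equal $n$ exactly, cross-checking against the base case (at $n=1$ the quantity $\frac{a+b}{b}$ must reduce to $3$), and if desired confirming the general pattern by a short induction on the length of the $2$-string using the peeling identity above. Once the count is fixed consistently, both evaluations are mechanical and the two stated equalities follow at once.
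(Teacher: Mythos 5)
The paper offers no proof of this lemma at all---it is treated as a routine Hirzebruch--Jung continued-fraction evaluation---so your direct computation is exactly the intended argument, and your treatment of the first identity ($[2,n+1]=2-\tfrac{1}{n+1}=\tfrac{2n+1}{n+1}$) is complete and correct. For the second identity, however, you should carry your own cross-check through to its conclusion rather than leaving the count of $2$'s to be ``pinned down'': your formula $[3,2,\dots,2]$ with $k$ trailing $2$'s equals $3-\tfrac{k}{k+1}=\tfrac{2k+3}{k+1}$ by Lemma~\ref{lem:Partial1}, and this matches $\tfrac{a+b}{b}=\tfrac{2n+1}{n}$ only when $k+1=n$, i.e.\ $k=n-1$. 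So the block following the leading $3$ must have length $n-1$, not $n$ as printed in the statement; the base case you propose confirms this ($n=1$ gives $\tfrac{a+b}{b}=3=[3]$ with no trailing $2$'s, whereas $[3,2]=\tfrac{5}{2}$), and it is consistent with Lemma~\ref{tdpfrac}, where the analogous trailing block is explicitly of length $n-1$. As a proof of the literal statement your plan would therefore stall at the matching step; the fix is simply to record the corrected count $k=n-1$ explicitly, after which your peeling argument closes the computation.
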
 

\subsection{tom Dieck-Petri Homology Planes}
\label{tdphom}

Next, we consider homology planes introduced by tom Dieck and Petri. In Figure~\ref{fig:tDP-DP}, we have the line arrangement $L(3)$ together with its blow ups at two points of multiplicity three. 

In this way, we can obtain an SNC curve and a dual graph $\Delta_D$ where we cut the cycles corresponding to intersections of lines $L_2\cap L_3, L_2\cap L_7, L_5\cap L_6$, and $L_1\cap L_4$. Here, the cutting data $(1,1)$ stands for the first three intersections and $(a,b)$ for the last one. The resulting dual graphs depicted in Figure \ref{fig:tDP} are called \emph{tom Dieck-Petri homology planes}.

\begin{figure}[htbp]
\begin{center}
\includegraphics[scale=.7]{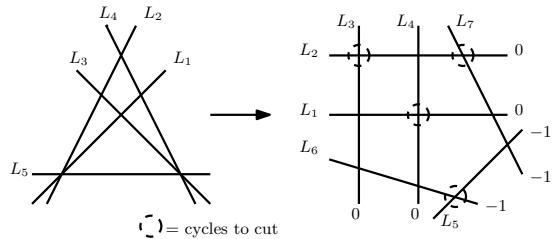}
\end{center}
\caption{The $L(3)$ arrangement.}
\label{fig:tDP-DP}
\end{figure}

\begin{figure}[htbp]
\begin{center}
\includegraphics[scale=.7]{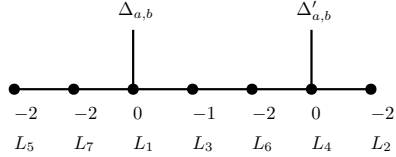}
\end{center}
\caption{The dual graph of tom Dieck-Petri homology planes.}
\label{fig:tDP}
\end{figure}

\begin{rem}
In order to obtain a homology plane from the conditions $(a,b)$ above, they must satisfy: $4b-3a=\pm 1$, see \cite[Section~5.2.3]{AA21}.
\end{rem}

\begin{lem}
\label{tdpfrac}
For $a=4n+1$ and $b=3n+1$, we have 
$$\frac{a+b}{b}=[3,2,2,n+1], \quad \frac{a+b}{a}=[2,5, \underbrace{2,2,\dots,2}_{n-1 \text{-times}}].$$
\end{lem}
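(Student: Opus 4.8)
The statement is a pair of Hirzebruch--Jung continued fraction identities, so the plan is a direct verification: I would unwind each continued fraction from its innermost entry outward, using the recursive definition $[d_s,\dots,d_1]=d_s-\tfrac{1}{[d_{s-1},\dots,d_1]}$ recorded in the excerpt. First I would note the basic numerics $a+b=(4n+1)+(3n+1)=7n+2$, together with $\gcd(a,b)=1$ (indeed $4b-3a=1$), which is exactly the coprimality required by the expanding-edge construction and the condition of the preceding Remark.

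For the first identity I would compute $[3,2,2,n+1]$ in four steps. Writing $x_1=n+1$, $x_2=2-\tfrac{1}{x_1}$, $x_3=2-\tfrac{1}{x_2}$, and $x_4=3-\tfrac{1}{x_3}$, an elementary calculation yields successively $x_2=\frac{2n+1}{n+1}$, then $x_3=\frac{3n+1}{2n+1}$, and finally $x_4=\frac{7n+2}{3n+1}$, which is precisely $\frac{a+b}{b}$.

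For the second identity the cleanest route is to isolate the trailing block of $2$'s first. From Lemma~\ref{lem:Partial1} (the case $a=n$, $b=1$) we have $[\underbrace{2,\dots,2}_{m}]=\frac{m+1}{m}$, so the length-$(n-1)$ block evaluates to $\frac{n}{n-1}$. Then $5-\frac{n-1}{n}=\frac{4n+1}{n}$, and finally $2-\frac{n}{4n+1}=\frac{7n+2}{4n+1}=\frac{a+b}{a}$, as claimed.

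The one genuinely delicate point---and the step I would be most careful about---is the base case $n=1$, where the block $\underbrace{2,\dots,2}_{n-1}$ is empty and the closed form $\frac{n}{n-1}$ degenerates. Here I would simply evaluate $[2,5]=2-\tfrac{1}{5}=\frac{9}{5}=\frac{7\cdot 1+2}{4\cdot 1+1}$ by hand, or adopt the standard convention assigning the empty block the formal value $\infty$, so that $5-\tfrac{1}{\infty}=5$ recovers the same answer. A uniform way to sidestep this bookkeeping entirely is to encode each continued fraction as a product of matrices $\left(\begin{smallmatrix} d & -1 \\ 1 & 0 \end{smallmatrix}\right)$ applied to $\left(\begin{smallmatrix} 1 \\ 0 \end{smallmatrix}\right)$, using the closed form $\left(\begin{smallmatrix} 2 & -1 \\ 1 & 0 \end{smallmatrix}\right)^{m}=\left(\begin{smallmatrix} m+1 & -m \\ m & -(m-1) \end{smallmatrix}\right)$ for the repeated-$2$ factor; this reduces every case, including $n=1$, to a single matrix multiplication whose top entries read off the numerator and denominator directly.
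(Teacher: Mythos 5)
Your verification is correct: both continued fractions unwind to $\frac{7n+2}{3n+1}$ and $\frac{7n+2}{4n+1}$ exactly as you compute, and your attention to the degenerate $n=1$ case (empty block of $2$'s) is well placed. The paper states this lemma without proof, treating it as a routine Hirzebruch--Jung computation, so your direct calculation is precisely the argument the authors leave implicit.
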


\subsection{Zaidenberg Homology Planes}
\label{zhom}

We finally consider the homology planes constructed by Zaidenberg \cite{Za93}. His approach provides the generalization of the original Ramanujam surface \cite{R71}. 

Consider $\mathbb{CP}^1\times\mathbb{CP}^1$ with coordinates $(x:y,u:v)$ and the curves 

\begin{align*}
c_s=\{u^2 y^s  = x^s v^2\}, \quad e_0=\{v=0\}, \quad e_1=\{y=0\}, \quad l_1=\{x=y\}.
\end{align*} 

\noindent Denote by $d_s=c_s\cup e_0\cup e_1\cup l_1$, let $\pi:\bar{X}'_s\to \mathbb{CP}^1\times \mathbb{CP}^1$ be the minimal resolution of singularities of $d_s$ and let $B_s=\pi^{-1}(d_s)$.  In Figure~\ref{fig:Zai}, there is a representation in the affine plane $\mathbb{C}^2$ with coordinates $(x,u)$ of the curve $d_s$.

\begin{figure}[htbp]
\begin{center}
\includegraphics[scale=.5]{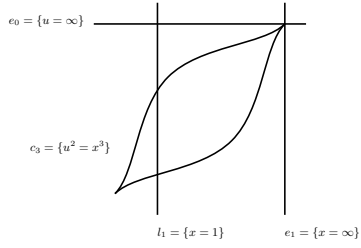}
\end{center}
\caption{Zaidenberg's curves.}
\label{fig:Zai}
\end{figure}

Consider the integer valued $2 \times 2$ matrix $$M=\left(\begin{array}{ll}
a &b \\
c &d
\end{array} \right), \ \ \ \mathrm{det}(M) = 1$$
and apply the cutting cycles procedure to $(\bar{X}'_s,B_s)$ in two points $z_1$ and $z_2$ corresponding to $l_1\cap c_s$ according to the numerical data in $M$ and denote by $(\bar{X}_s,W_s)$ the resulting pair. 
Zaidenberg proved that the surface $X_s=\bar{X}_s\setminus W_s$ is a contractible homology plane \cite{Za93}.

Here we only use the case of $s=3$. A partial resolution of singularities of the curve $d_s$ is shown in Figure~\ref{fig:ZaiRes}. This can be used to obtain the dual graph of $W_s$, depicted in Figure \ref{fig:ZaiDualGraph}.  
They are said to be \emph{Zaidenberg homology planes}.

\begin{figure}[htbp]
\begin{center}
\includegraphics[scale=1]{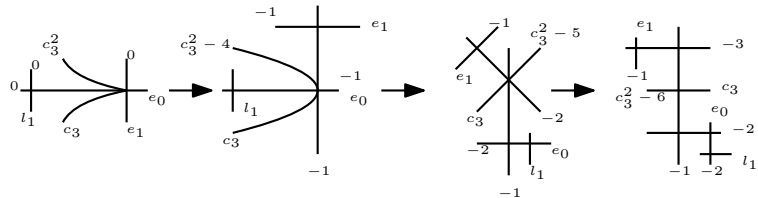}
\end{center}
\caption{A partial resolution of singularities of the curve $d_s$.}
\label{fig:ZaiRes}
\end{figure}

\begin{figure}[htbp]
\begin{center}
\includegraphics[scale=.75]{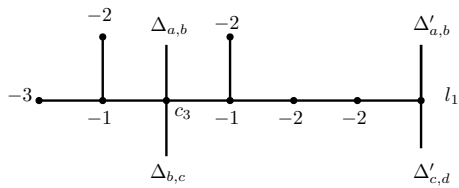}
\end{center}
\caption{The dual graph of the divisor $W_s$.}
\label{fig:ZaiDualGraph}
\end{figure}

\begin{exmp}
The original Ramanujam surface corresponds to the following numerical data: \begin{center}
$s=3 \ \ \text{and} \ \ M=\left(\begin{array}{ll}
1 &1 \\
1 &2
\end{array} \right) \cdot $ 
\end{center}
\end{exmp}

Analysing the Zaidenberg's construction, we see that Ramanujam manifolds studied by the second author in \cite{OS20} are also homology planes. 

\begin{pro}
\label{savk}
Let $W(n)$ be Ramanujam manifolds depicted in Figure~\ref{fig:Ramanujam}. Then the boundaries $\partial W(n)$ are Kirby-Ramanujam homology spheres.
\end{pro}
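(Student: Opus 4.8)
The plan is to verify the two defining properties of a Kirby-Ramanujam sphere for $\partial W(n)$ separately. One half is already available: by the second author's work \cite{OS20}, each $\partial W(n)$ bounds a Po\'enaru manifold assembled from one $0$-handle, two $1$-handles, and two $2$-handles, and for $n \geq 1$ these are non-trivial homology spheres (the $n=1$ case being Ramanujam's original boundary). It therefore remains only to show that every $\partial W(n)$ also bounds a homology plane. Equivalently, I would realize the weighted tree of Figure~\ref{fig:Ramanujam} as the absolutely minimal dual graph of a Zaidenberg homology plane $X_s$ with $s=3$; once this is done, $\partial W(n) = \partial X_3$ bounds the homology plane $X_3$ and the definition is met.

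First I would specialize Zaidenberg's construction to $s=3$ and feed it the natural one-parameter family of unimodular matrices
$$ M_n = \begin{pmatrix} 1 & 1 \\ n & n+1 \end{pmatrix}, \qquad \det M_n = 1, $$
so that $M_1$ recovers the matrix $\left(\begin{smallmatrix} 1 & 1 \\ 1 & 2 \end{smallmatrix}\right)$ of the original Ramanujam surface. The entries of $M_n$ prescribe the two coprime pairs $(1,n)$ and $(1,n+1)$ used to cut the two cycles at the points $z_1,z_2$ lying over $l_1 \cap c_3$. Carrying out the cutting-cycles procedure then amounts, for each of these two edges, to expanding it into a linear string governed by the continued fractions $\frac{a+b}{b}$ and $\frac{a+b}{a}$, and afterwards deleting the central $(-1)$-vertex $E_0$ together with its two incident edges.

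The computational heart is the evaluation of these continued fractions and the resulting weights. Here Lemma~\ref{lem:Partial1} supplies the key identities $\frac{n+1}{n} = [\,2,2,\dots,2\,]$ ($n$ times) and $\frac{n+1}{1} = n+1$, which produce the chains of $(-2)$-curves visible in Figure~\ref{fig:Ramanujam}; the remaining fractions coming from $M_n$ expand into short strings whose weights I would read off directly. Starting from Zaidenberg's partial resolution of $d_3$ (Figure~\ref{fig:ZaiRes}) and the dual graph $W_s$ (Figure~\ref{fig:ZaiDualGraph}), I would substitute the strings produced by $M_n$, apply the end-vertex corrections $D_k^2 - c_{-r} + 1$ and $D_l^2 - c_s + 1$, delete the two $E_0$ vertices, and check that the resulting tree is exactly $W(n)$. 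Matching this against the absolutely minimal graph of Figure~\ref{fig:Ramanujam} identifies $W(n)$ with the Zaidenberg homology plane $X_3$ for the data $M_n$.

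The hard part will be precisely this final matching: the cutting-cycles step modifies the weights of the two vertices adjacent to each expanded edge, so one must track these corrections through both expansions simultaneously and confirm that no further blow-down or absolute-minimalization is required for the output to coincide with the normal form drawn in Figure~\ref{fig:Ramanujam}. Once the graphs agree, Zaidenberg's theorem \cite{Za93} guarantees that $X_3 = \bar{X}_3 \setminus W_3$ is a contractible homology plane, so $\partial W(n)$ bounds a homology plane; combined with the Po\'enaru manifold of \cite{OS20}, this shows each $\partial W(n)$ is a Kirby-Ramanujam sphere.
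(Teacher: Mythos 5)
Your proposal takes essentially the same route as the paper: cite \cite{OS20} for the Po\'enaru-manifold half, then realize $W(n)$ as the Zaidenberg homology plane with $s=3$ and $M_n=\left(\begin{smallmatrix}1&1\\ n&n+1\end{smallmatrix}\right)$ by running the cutting-cycles procedure and matching dual graphs. The one discrepancy is in how the cutting data is extracted from $M_n$: the paper invokes Lemma~\ref{lem:CalcFraction} for the pair $(n+1,n)$ (read from a row of $M_n$), whereas you read off the pairs $(1,n)$ and $(1,n+1)$ and invoke Lemma~\ref{lem:Partial1}; these yield different continued-fraction strings, so you should verify which convention Zaidenberg's construction actually uses before carrying out the final graph comparison.
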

\begin{proof}

By \cite{OS20}, we already know that the homology spheres $\partial W(n)$ bound Po\'enaru manifolds. Also, their dual graph corresponds to the numerical data $s=3$ and $M=\left(\begin{array}{ll}
1 &1 \\
n &n+1
\end{array} \right)$.
Using Lemma \ref{lem:CalcFraction} and the dual graph in Figure \ref{fig:ZaiRes}, we are done.
\end{proof}

\section{Preliminaries in Low-Dimensional Topology}
\label{ldt}

\subsection{Mazur and Po\'enaru Manifolds}

A knot $K$ in $S^3$ is said to be a \emph{ribbon knot} if it can be built by attaching bands between components of an unlink. The minimal number of bands is recorded by the quantity called \emph{fusion number}. Since the Euler characteristic of a disk is one, the fusion number also determines the number of components of the corresponding unlink. A knot $K$ in $S^3$  is called a \emph{slice knot} if it bounds an embedded smooth disk in $B^4$. In other words, ribbon knots are slice knots whose slice disks have no $2$-dimensional $2$-handles. The disks corresponding to ribbon knots are said to be \emph{ribbon disks}.

The basic examples of \emph{ribbon knots} are the unknot $U$, the square knot $T(2,3) \# \overline{T(2,3)}$, and the generalized square knot $T(n+1,n+2) \# \overline{T(n+1,n+2)}$ for $n \geq 1$. They respectively have fusion numbers $0$, $1$, and $n$, see \cite[Section~1.7]{JMZ20}. Here, the notation $T(p,q)$ stands for the left-handed $(p,q)$-torus knot given coprime positive integers $p$ and $q$ and $\overline{T(p,q)}$ denotes its mirror image.

The Po\'enaru manifolds are defined by attaching a $4$-dimensional $2$-handle to ribbon disk exteriors of $B^4$ in order to kill their fundamental groups. This can be achieved easily by using a knot in the $3$-manifold obtained by $0$-surgery on the ribbon knot, provided that the former knot normally generates the fundamental group of ribbon disk exteriors of $B^4$.  The original construction of Po\'enaru addresses the square knot \cite{P60} and the strategy can be generalized to all ribbon knots directly, see \cite[Pg.289-290]{AG96}. The slice disks may have $2$-dimensional $2$-handles; therefore, the handle decomposition of the corresponding Po\'enaru manifolds may include $3$-handles. Since we want to control the number of handles of the $4$-manifold precisely, we prefer to work with ribbon knots. 

\begin{lem}[Lemma~2.1, \cite{OS20}]
\label{mazur}
Let $Y$ be the $3$-manifold obtained by $0$-surgery on a ribbon knot in $S^3$ with the fusion number $n \geq 1$. Then any homology sphere obtained by an integral surgery on a knot in $Y$ bounds a Po\'enaru manifold with one $0$-handle, $n+1$ $1$-handles, and $n+1$ $2$-handles. Further, if the initial ribbon knot is the unknot, then the resulting homology sphere bounds a Mazur manifold with one $0$-handle, one $1$-handle, and one $2$-handle.
\end{lem}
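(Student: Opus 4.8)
The plan is to realize the required contractible filling directly as the exterior of the ribbon disk, together with a single extra $2$-handle, and then to verify contractibility handle by handle. Write $R$ for the ribbon knot, $D\subset B^4$ for a ribbon disk realizing the fusion number $n$, and $W_0=B^4\setminus \nu(D)$ for its exterior. The first step is to record the standard handle structure of $W_0$: building $D$ from its $n+1$ minima and $n$ saddles dualizes to a decomposition of $W_0$ with one $0$-handle, $n+1$ $1$-handles, and $n$ $2$-handles (the $1$-handles coming from the $n+1$ trivial minimum disks, the $2$-handles from the $n$ bands). Two features of $W_0$ will be used repeatedly: its boundary is the $0$-surgery, $\partial W_0=S^3_0(R)=Y$, and $W_0$ has the integral homology of $S^1$ with $\pi_1(W_0)$ normally generated by a meridian $\mu$ of $D$ (both are general properties of slice-disk exteriors). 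In particular the inclusion $Y=\partial W_0\hookrightarrow W_0$ carries $H_1(Y)\cong\mathbb{Z}$ isomorphically onto $H_1(W_0)\cong\mathbb{Z}$.

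Next I would assemble the candidate filling. Given a knot $\kappa\subset Y$ whose integral surgery is the homology sphere $\Sigma$, attach one $4$-dimensional $2$-handle to $W_0$ along $\kappa$ with the surgery framing, and call the result $P$. Attaching this $2$-handle realizes precisely the surgery on $\kappa$, so $\partial P=\Sigma$, and the handle count of $P$ is one $0$-handle, $n+1$ $1$-handles, and $n+1$ $2$-handles, matching the asserted Po\'enaru decomposition; when $R$ is the unknot one has $n=0$, $W_0=S^1\times B^3$, and $P$ has a single handle of each index $0,1,2$, i.e. a Mazur manifold.

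The acyclicity of $P$ is then a homology computation. Since $\Sigma$ is a homology sphere, the surgery must kill $H_1(Y)\cong\mathbb{Z}$, which forces $[\kappa]$ to be a generator of $H_1(Y)$ and hence of $H_1(W_0)$. Feeding this into the long exact sequence of the pair $(P,W_0)$, the connecting homomorphism $H_2(P,W_0)\cong\mathbb{Z}\to H_1(W_0)\cong\mathbb{Z}$ is multiplication by $\pm 1$, so $H_1(P)=H_2(P)=0$; as $P$ has no handles of index $3$ or $4$, it is acyclic.

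The main obstacle is upgrading acyclicity to contractibility, that is, showing $\pi_1(P)=\pi_1(W_0)/\langle\langle\kappa\rangle\rangle=1$. Acyclicity alone only yields that this quotient is perfect, so the ribbon structure must be exploited: $\pi_1(W_0)$ is normally generated by the single meridian $\mu$, the inclusion $Y\hookrightarrow W_0$ is $\pi_1$-surjective, and $\kappa$ is homologous to $\mu$ in $W_0$; the heart of the argument is to promote this homological agreement to the statement that $\kappa$ \emph{normally} generates $\pi_1(W_0)$, so that the final $2$-handle trivializes the group. Granting this, $P$ is acyclic and simply connected, hence contractible by Hurewicz and Whitehead, and is therefore a Po\'enaru (respectively Mazur) manifold with $\partial P=\Sigma$. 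I expect this $\pi_1$-step to be the delicate point, since it is exactly where the hypothesis that $\Sigma$ is a homology sphere arising from $Y$ — rather than an arbitrary homology sphere — is essential.
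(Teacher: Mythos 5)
Your construction is the intended one: carve out the ribbon disk rather than attaching a $0$-framed $2$-handle along $R$, so that $W_0=B^4\setminus\nu(D)$ has one $0$-handle, $n+1$ $1$-handles and $n$ $2$-handles with $\partial W_0=S^3_0(R)$, and then cap off with a single $2$-handle along $\kappa$. The handle count, the identification $\partial P=\Sigma$, the deduction that $[\kappa]$ generates $H_1(Y)\cong H_1(W_0)$, and the resulting acyclicity of $P$ are all correct. Your $n=0$ case is also genuinely complete, since $\pi_1(S^1\times B^3)=\mathbb{Z}$ is abelian and there normal generation coincides with homological generation.

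However, for $n\geq 1$ your proof stops exactly at the statement that carries all of the content: you write ``Granting this'' at the point where one must show that $\kappa$ normally generates $\pi_1(W_0)$. This is a genuine gap, not a routine verification. The ribbon disk group $\pi_1(W_0)$ is in general non-abelian (already for the standard ribbon disk of the square knot it is a nontrivial knot group), and an element of augmentation $\pm 1$ in a weight-one group need not be a normal generator: the quotient $\pi_1(W_0)/\langle\langle\kappa\rangle\rangle$ is only forced to be perfect, and perfect groups with balanced presentations can be nontrivial. Equivalently, acyclic $4$-manifolds with homology sphere boundary that fail to be simply connected do exist, so no amount of homological bookkeeping can close this step. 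The hypothesis you hope to exploit --- that $\Sigma$ is obtained by surgery on an embedded curve in $\partial W_0$ --- is not shown to rule this out, and you offer no mechanism for doing so. In the applications in this paper the difficulty is sidestepped because the curve $\kappa$ is produced explicitly by Kirby moves (it is visibly a meridian of the ribbon knot, hence conjugate to the standard normal generator $\mu$ of $\pi_1(W_0)$), so normal generation can be read off from the diagram; a proof of the lemma in the generality of ``any knot in $Y$'' must either supply an argument for the normal generation of $\pi_1(W_0)$ by $\kappa$ or restrict the statement to such explicitly controlled curves. As written, your proposal establishes that $P$ is an acyclic filling with the right handle decomposition, but not that it is contractible, which is what ``Mazur or Po\'enaru manifold'' requires.
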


In order to shorten the proof of Theorem~\ref{resolution}, we need the following Kirby calculus trick of Akbulut and Larson. It was found in \cite{AL18} and played a key role in the proof of their main theorem. Later, it was also effectively used by the second author \cite{OS20, S20}. 

The \emph{Akbulut-Larson trick} is an observation about describing the iterative procedure for passing from the surgery diagram of a homology sphere to a consecutive one by using a single blow up with an isotopy, see Figure~\ref{fig:trick}.

\begin{figure}[htbp]
\begin{center}
\includegraphics[width=0.5\columnwidth]{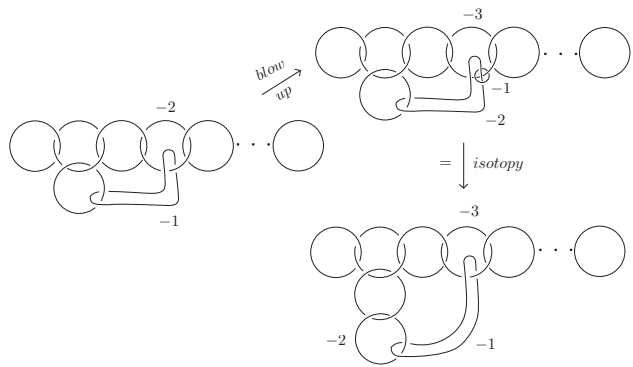}
\end{center}
\caption{The Akbulut-Larson trick.}
\label{fig:trick} 
\end{figure}

\subsection{Splicing Homology Spheres}
\label{splicing}

The splice operation was defined by Siebenmann \cite{S80} and later it was elaborated systematically in the novel book of Eisenbud and Neumann \cite{EN85}.

Let $(Y_1, K_1)$ and $(Y_2, K_2)$ be two pairs of homology spheres and knots together with meridians and longitudes $(\mu_1, \lambda_1)$ and $(\mu_2, \lambda_2)$, and tubular neighboords $\nu(K_1)$ and $\nu(K_2)$ inside $Y_1$ and $Y_2$ . The \emph{splice} operation between $(Y_1, K_1)$ and $(Y_2, K_2)$ produces a homology sphere given by

\begin{center}
$Y_1 \tensor[_{K_1}]{\bowtie}{_{K_2}} Y_2 \doteq \left ( Y_1 \setminus \intK \right ) \cup \left ( Y_2 \setminus \intJ \right )$
\end{center}

\noindent where the pasting homeomorphism along tori boundaries sends $\mu_1$ onto $\lambda_2$ and $\lambda_1$ onto $\mu_2$. 

Since Brieskorn spheres bound unique negative definite plumbing graphs \cite[Example~1.17]{Sav02}, we can describe their splice on a joint plumbing graph by using the recipe in \cite[Chapter~V.20]{EN85}. These plumbings graphs of Brieskorn spheres are the same as their resolution dual graphs of singularities up to blow up and down, see also Neumann's plumbing calculus \cite[Theorem~18.3-18.4]{EN85}.

Let $\left (\Sigma(a_1, a_2 , a_3), K(a_n) \right )$ and $\left (\Sigma(b_1, b_2, b_3), K(b_m) \right )$ be pairs of Brieskorn spheres and singular fibers with plumbing graphs $G$ and $G'$ respectively. In Figure~\ref{fig:splice}, the arrows indicate the singular fibers. The weights $e_n$ and $e_m$ label the endmost vertices in the branches of $K(a_n)$ and $K(a_m)$ respectively. The additional weights $x$ and $y$ appeared after the splice operation are given by the formula $x = \mathrm{det}(G_0)$ and $y = \mathrm{det}(G'_0)$ where $G_0$ and $G'_0$ are the portions of $G$ and $G'$ obtained by removing $e_n$ and $e_m$ respectively. These type of determinants can be easily found, consult \cite[Chapter~V.21]{EN85}.

\begin{figure}[htbp]
\begin{center}
\includegraphics[width=0.65\columnwidth]{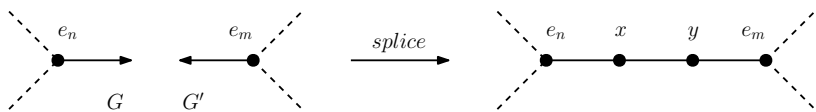}
\end{center}
\caption{The splice of plumbing graphs.}
\label{fig:splice} 
\end{figure}

\section{Proofs of Theorem~\ref{resolution} and Theorem~\ref{splice}}\label{proofs}

Since we need a portion of the proof Theorem~\ref{resolution} in the proof argument of Theorem~\ref{splice}, we first show Theorem~\ref{resolution}.

\begin{proof}[Proof of Theorem~\ref{resolution}]

We present our proof case by case.

\vspace{0.5 em}

\noindent $\bullet$ \textbf{The Family} $X(n):$ First, we verify that $X(n)$ for every $n \geq 1$ originate from tom Dieck-Petri homology planes by addressing Subsection~\ref{tdphom}. Note that we can contract $-1$'s in the dual graph shown in Figure~\ref{fig:tDP}. Then we set $a=4n+1$ and $b=3n+1$ as in  Lemma~\ref{tdpfrac}. Writing $\Delta_{a,b}$ and $\Delta_{a,b}'$ explicitly, we obtain the dual graph $X(n)$ displayed in Figure~\ref{fig:all}. Since $X(n)$ does not come from a Gurjar-Mayanishi type dual graph and it is absolutely minimal, the logarithmic Kodaira dimension of $X(n)$ is $2$, see \cite[Appendix~A.5]{Za93}

In Figure~\ref{fig:x}, we first draw the surgery diagram of $\partial X(1)$. The additional $2$-handle is shown in dark black. Applying the several Kirby moves indicated in the picture, we end up with the surgery diagram of $S^3_0(U)$. Now we can use Lemma~\ref{mazur} to conclude that $\partial X(1)$ bounds a Mazur manifold with one $0$-handle, one $1$-handle, and one $2$-handle. We can guarantee that all $\partial X(n)$ for $n \geq 2$ bound such Mazur manifolds as well by using the Akbulut-Larson trick consecutively, see Figure~\ref{fig:trick2}.

\begin{figure}[htbp]
\begin{center}
\includegraphics[width=0.65\columnwidth]{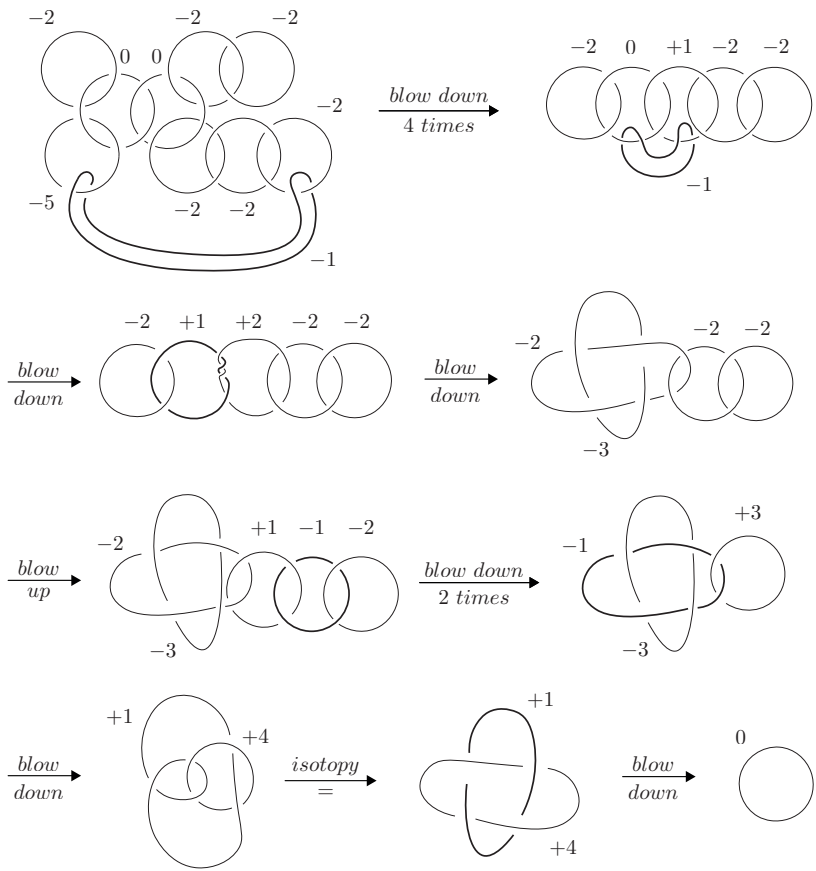}
\end{center}
\caption{$(-1)$-surgery from $\partial X(1)$ to $S^1 \times S^2 = S^3_0 (U)$.}
\label{fig:x} 
\end{figure}

\begin{figure}[htbp]
\begin{center}
\includegraphics[width=0.55\columnwidth]{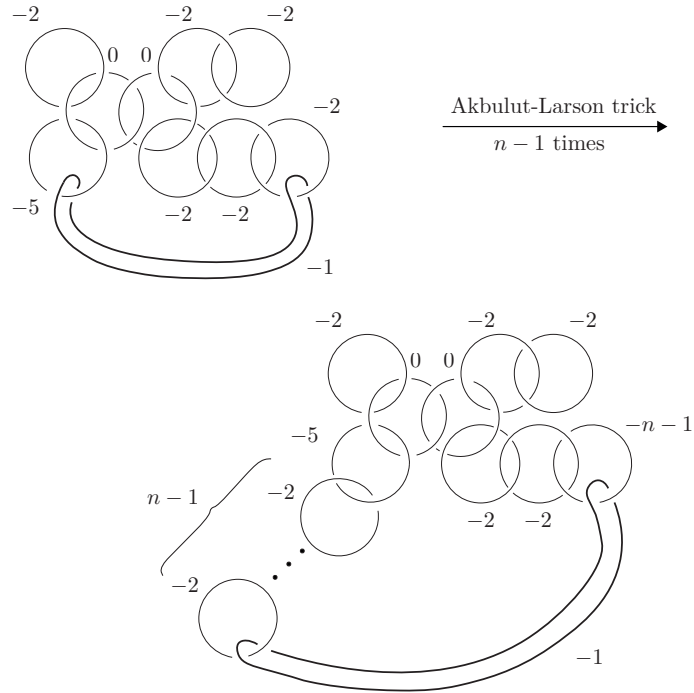}
\end{center}
\caption{The Akbulut-Larson trick for the family $X(n)$.}
\label{fig:trick2} 
\end{figure}

\vspace{0.5 em}

\noindent $\bullet$ \textbf{The Family} $Y(n):$ By following Subsection~\ref{zhom}, we initially verify that $Y(n)$ for $n \geq 1$ are indeed Zaidenberg homology planes. We use Lemma \ref{lem:CalcFraction} and the dual graph in Figure \ref{fig:ZaiRes} to guarantee that the resolution graph of $Y(n)$ is encoded by the numerical data $s=3$ and $M=\left(\begin{array}{ll}
1 &1 \\
n+1 &n
\end{array} \right)$.
Since Zaidenberg classified his homology planes in terms of their logarithmic Kodaire dimensions \cite{Za93}, we know that $\bar{k} (Y(n)) = 2$.

Then we address our previous strategy for the family $Y(n)$. We first start with the surgery diagram of $\partial Y(1)$ and then attach a $(-1)$-framed dark black $2$-handle. Now we perform various blow down operations and we eventually find the surgery diagram of $S^3_{0} (T(2,3) \# \overline{T(2,3)})$. Recall that the square knot has fusion number one, by using Lemma~\ref{mazur}, we can say that $\partial Y(1)$ bounds a Po\'enaru manifold with one $0$-handle, two $1$-handles, and two $2$-handles. Applying the Akbulut-Larson trick again, we can pass the surgery diagram of $\partial Y(2)$, and so on; therefore, we are done.

\begin{figure}[htbp]
\begin{center}
\includegraphics[width=0.65\columnwidth]{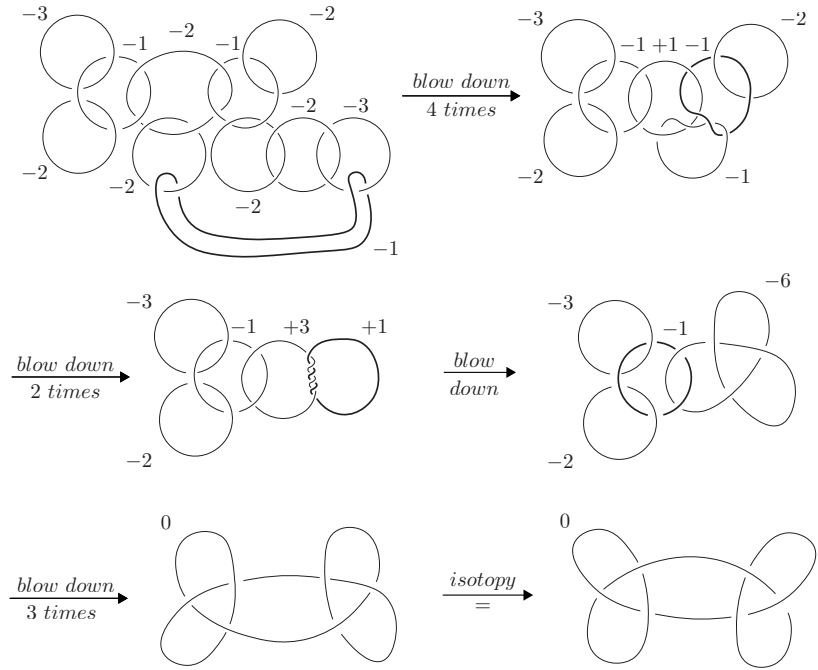}
\end{center}
\caption{$(-1)$-surgery from $\partial Y(1)$ to $S^3_{0} (T(2,3) \# \overline{T(2,3)})$.}
\label{fig:y} 
\end{figure}

\vspace{0.5 em}

\noindent $\bullet$ \textbf{The Family} $Z(n):$ We first see that $Z(n)$ for $n \geq 1$ are Gurjar-Miyanishi homology planes by following Subsection~\ref{gmhom}.  Let $k=1, (a,b)=(n+2,1)$ and $(c,d)=(n+1,n)$. Note that the vertex $v_3$ in Figure~\ref{fig:GM-dg} has weight $-1$, so we can contract it. Using Lemma~\ref{lem:Partial1} and Lemma~\ref{lem:CalcFraction}, we can write the linear graphs obtained by cutting the cycles explicitly.  Thus, the dual graph of $Z(n)$ corresponds to one shown in Figure~\ref{fig:all}. Moreover, $\bar{k} (Z(n)) = 1$.

Now, we begin with the surgery diagram of $\partial Z(n)$ for every $n \geq 1$, see Figure~\ref{fig:z}. Next, we apply a blow up and find the second picture. Performing many blow down operations along both right- and left-hand sides of our figure, we finally reach the surgery diagram of the manifold $S^3_{-1} (T(n+1,n+2) \# \overline{T(n+1,n+2)})$, compare with \cite[Proposition~1.A-1.B]{K78}. Note that the last two blow downs in Figure~\ref{fig:z} respectively change the framings as follows: 
\begin{itemize}
\item $(-1)$-blow down effect: $2n+2 + (n+1)^2 = n^2 + 4n + 3$, 
\item $(+1)$-blow down effect: $n^2 + 4n +3 - (n+2)^2 = -1$.
\end{itemize}

To address Lemma~\ref{mazur}, we can add a $(-1)$-framed  $2$-handle to pass to the surgery diagram $S^3_{0} (T(n+1,n+2) \# \overline{T(n+1,n+2)})$. Since the generalized square knot has fusion number $n$, we can conclude that for each $n \geq 1$, our Kirby-Ramanujam spheres $\partial Z(n)$ bounds a Po\'enaru manifold built with one $0$-handle, $n+1$ $1$-handles, and $n+1$ $2$-handles.

\begin{figure}[htbp] 
\begin{center}
\includegraphics[width=0.65\columnwidth]{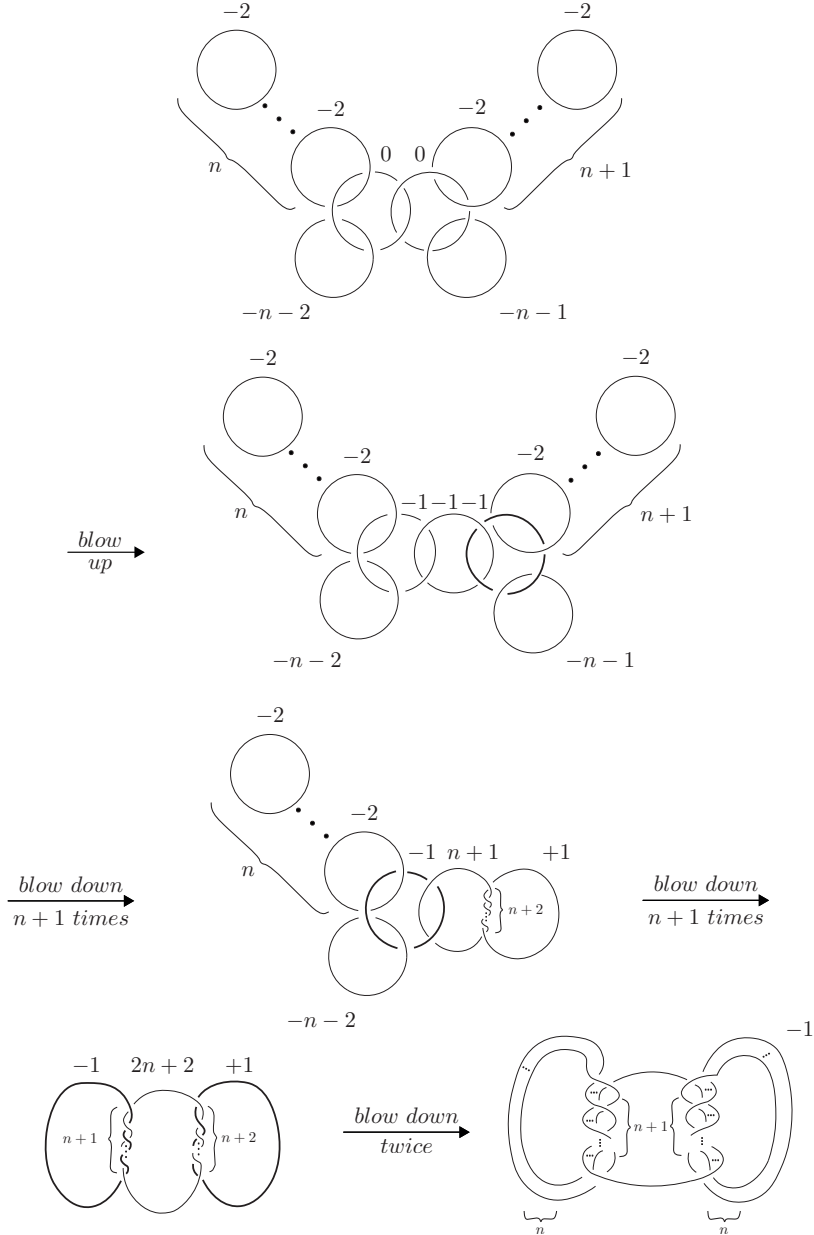}
\end{center}
\caption{A diffeomorphism between $\partial Z(n)$ to $S^3_{-1} (T(n+1,n+2) \# \overline{T(n+1,n+2)})$.}
\label{fig:z}
\end{figure}

\end{proof}

Now we are ready to prove our other theorem.

\begin{proof}[Proof of Theorem~\ref{splice}]

We already know from the previous proof that homology spheres $\partial Z(n)$ and $S^3_{-1} (T(n+1,n+2) \# \overline{T(n+1,n+2)})$ are diffeomorphic for every $n \geq 1$. We claim that there is also a diffeomorphism between the latter family of $3$-manifolds and the splice of Brieskorn spheres $\Sigma(n+1, n+2, n^2 +3n +1) \tensor[_{K(n^2 +3n +1)}]{\bowtie}{_{K(n^2 +3n +3)}} \Sigma(n+1, n+2, n^2 +3n +3).$

We first consider the base case $n=1$. Using the recipe in Subsection~\ref{splicing}, we can easily find the plumbing graph of $\Sigma(2,3,5) \tensor[_{K(5)}]{\bowtie}{_{K(7)}} \Sigma(2,3,7)$, shown in Figure~\ref{fig:splice3}. Using the recipe in Subsection~\ref{splicing}, one can find the additional weights as $-2$ and $-1$, which correspond to the determinants of the $E_7$ graph and the linear graph with vertices $-2, -1$ and $-3$, respectively. Next, we apply blow down operations six many times and we obtain the last picture in Figure~\ref{fig:splice3}. In Figure~\ref{fig:splice2}, we first draw the surgery diagram of the last plumbing graph appeared in Figure~\ref{fig:splice3}. Applying blow down operations seven many times more, we get the surgery diagram of $S^3_{-1} (T(2,3) \# \overline{T(2,3)})$, as expected. Our argument and procedure can be simply generalized to all values of $n$, so they are left to readers as exercises. In the general case, one can work with positive-definite plumbing graphs to avoid sign ambiguities of additional vertices appeared after the splicing operation.

\begin{figure}[htbp]
\begin{center}
\includegraphics[width=0.65\columnwidth]{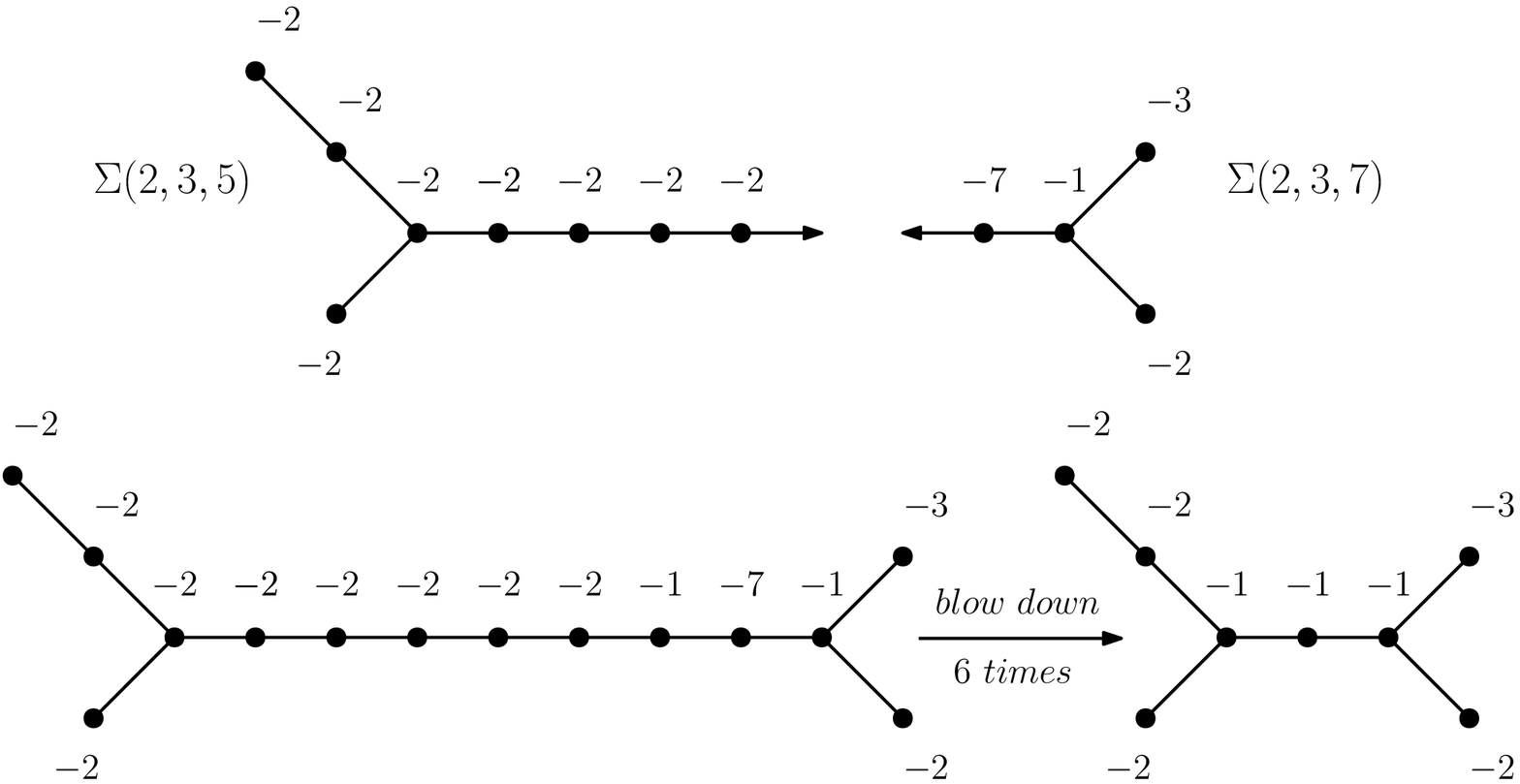}
\end{center}
\caption{The splice graph of $\Sigma(2,3,5) \tensor[_{K(5)}]{\bowtie}{_{K(7)}} \Sigma(2,3,7)$.}
\label{fig:splice3} 
\end{figure}

\begin{figure}[htbp]
\begin{center}
\includegraphics[width=0.65\columnwidth]{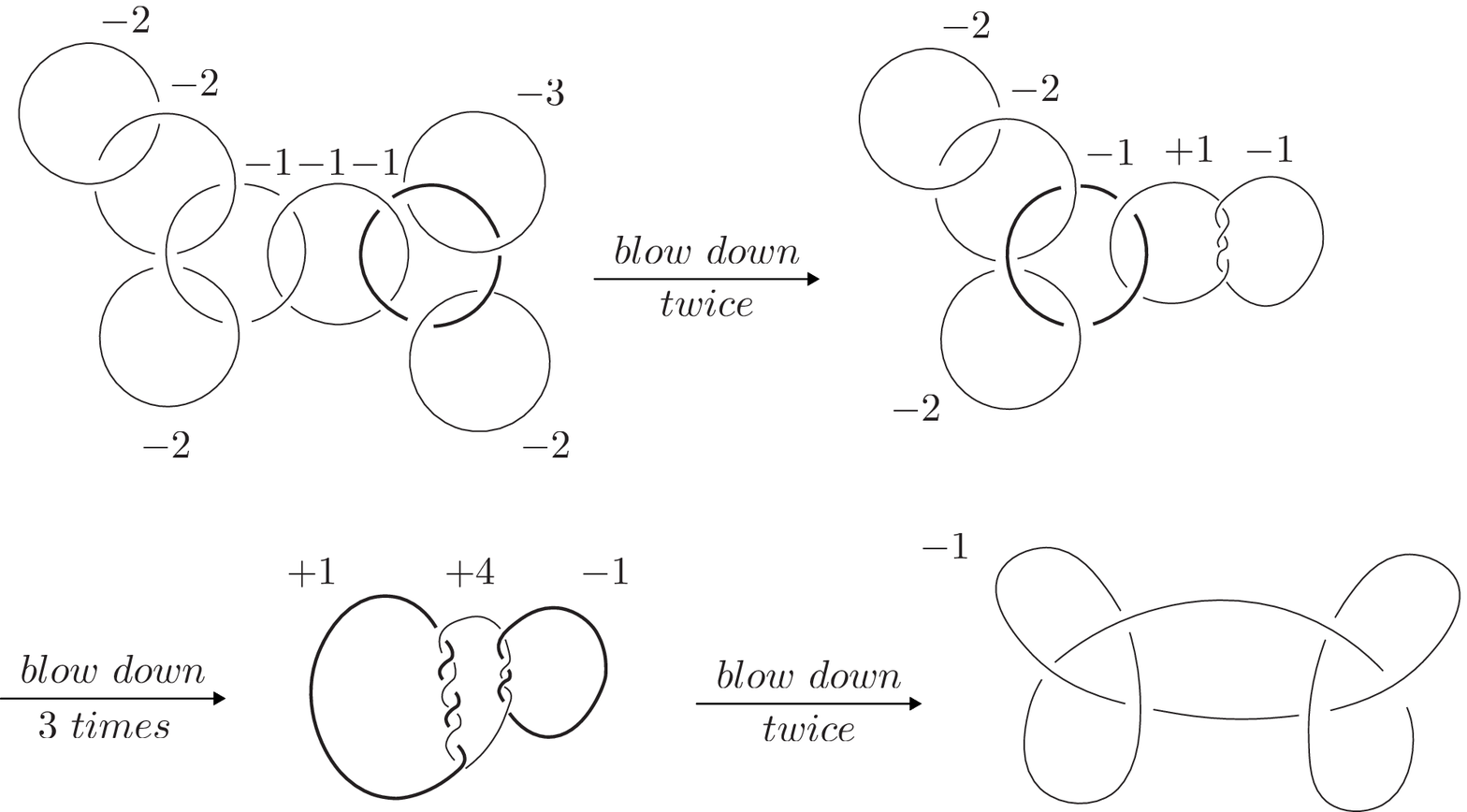}
\end{center}
\caption{A diffeomorphism between $\Sigma(2,3,5) \tensor[_{K(5)}]{\bowtie}{_{K(7)}} \Sigma(2,3,7)$ and $S^3_{-1} (T(2,3) \# \overline{T(2,3)})$.}
\label{fig:splice2} 
\end{figure}

\begin{rem}
An alternative proof argument for Theorem~\ref{splice} can be given as follows.

\begin{itemize}[leftmargin=2em]
\item One can prove our initial claim in full generality by using $3$-dimensional techniques of Fukuhara and Maruyama \cite[Pg.~285-286]{FM88}. They provided a surgery formula for the splicing operation of homology spheres. Their observation is a generalization of Gordon's previous arguments in \cite{G75} and \cite[Pg.~700-701]{G83}. Their useful observation was recently reproved by Karakurt, Lidman, and Tweedy by using Kirby calculus \cite[Lemma~2.1]{KLT21}. Thus we have
\begin{enumerate}

\item $\Sigma_1 \tensor[_{K_1}]{\bowtie}{_{K_2}} \Sigma_2 \approx S^3_{-1}(T(n+1,n+2)) \tensor[_{K_1}]{\bowtie}{_{K_2}} S^3_{-1}(\overline{T(n+1,n+2)})$, and
\item $S^3_{-1}(T(n+1,n+2)) \tensor[_{K_1}]{\bowtie}{_{K_2}} S^3_{-1}(\overline{T(n+1,n+2)}) \approx S^3_{-1} (T(n+1,n+2) \# \overline{T(n+1,n+2)})$.
\end{enumerate}

\item  A famous result of Gordon indicates that a homology sphere obtained by $(\pm 1)$-surgery along a slice knot in $S^3$ bounds a contractible $4$-manifold \cite[Theorem~3]{G75}.
\end{itemize}
\end{rem}

\end{proof}

\section{Further Directions}
\label{further}

We state some problems and questions for possible further research directions engaging with Kirby-Ramanujam spheres and their modifications. 

\begin{introques} Let $X$ be a homology plane and $\partial X$ be a Kirby-Ramanujam sphere bounding a Mazur or Po\'enaru manifold $W$. Does $W$ always admit a complex structure? If yes, is there a complex structure such that $W$ is biholomorphic to $X$?
\end{introques}

In a similar vein to the concept of Kirby-Ramanujam spheres, we introduce the objects called \emph{Ramanujam spheres}. They are non-trivial homology spheres bounding homology planes. In contrast to Theorem \ref{splice}, we have a certain constraint due to the article of Orevkov \cite{O97} in which he proved that Brieskorn spheres as well as Seifert fibered spheres cannot be Ramanujam spheres. 

\begin{introprob} 
Which homology spheres arise as Kirby-Ramanujam spheres or Ramanujam spheres?
\end{introprob}

In order to compare our objects, we may raise the following naive question:

\begin{introprob} 
Are Ramanujam spheres always Kirby-Ramanujam spheres?
\end{introprob}

A homology plane $X$ is known to be affine \cite{Fuj82}, see also \cite[Lemma~2.1]{Z98}. Therefore, $\partial X$ admits a contact structure. However, there are examples of Mazur manifolds with Brieskorn sphere boundaries such that they cannot admit Stein structures due to Mark and Tosun \cite{MT18}.  

\begin{introques} 
Is there any example of a Kirby-Ramanujam sphere that bounds an Akbulut cork?
\end{introques}

Since there are several examples of homology planes that are known to be non-contractible \cite{MM01, AA21}, we curiously ask the following question. Compare with \cite[Problem~G]{S22}.

\begin{introques} Does a Kirby-Ramanujam sphere bound a non-contractible homology plane?
\end{introques}

It would be interesting to study Ramanujam spheres by using the certain invariants of $3$-manifolds \cite{OS03, AN08, GM21, AM22}. Note that some these invariants are only defined for specific $3$-manifolds. Hence we inquire:
\begin{introques} 
Is it possible to extend/compute all these types of invariants for Ramanujam spheres?
\end{introques}

If the answer is positive, then one can pursue to extend invariants to $\mathbb{Q}$-homology planes. Similarly, they are affine complex smooth surfaces having the same $\mathbb{Q}$-homology as $\mathbb{C}^2$. For the basic properties and the partial classification of $\mathbb{Q}$-homology planes, see \cite{MM01, P21}.

Using the notion of bad vertices in \cite{AN05}, one can show that the graphs $X(n)$ and $Z(n)$ in Figure~\ref{fig:all} have two bad vertices. One can obtain rational singularities by reducing the weight of these vertices. It may be interesting if one could extend and compute the analytic lattice cohomology \cite{AN21} to graphs of type $2$ in the sense of \cite{OSS14} and relate it to analytic invariants of the algebraic surface.

In \cite[Chapter 14]{Sav99}, the representation varieties of Seifert fibered spheres were discussed. In contrast to the Orekov's theorem \cite{O97}, there exist homology planes of log-Kodaira dimension $1$ with the fundamental group isomorphic to that of a Seifert fibered manifold. Also, there is a surjective homorphism $\pi_1(\partial X)\to \pi_1(X)$ for a homology plane. Indeed, consider a smooth compactification $\bar{X}$ of $X$. It is known that $\bar{X}$ is rational, and hence simply-connected. In this case, the fundamental group $\pi_1(X)$ is generated by meridians around the irreducible components of $D$, see \cite[Section 5.6]{S07}. These meridians can be included in the generators of the boundary of the homology plane $\pi_1(\partial U)$. For the explicit description, see \cite{AA21}.

\begin{introprob} For a non-contractible homology plane $X$, study $\Hom(\pi_1(X), \SU(2))$ the representation variety of the Ramanujam sphere $\partial X$ as a subvariety of the representation variety $\Hom(\pi_1(\partial X), \SU(2))$ of $X$.
\end{introprob}

\begin{introques} Which invariants or representations of a Ramanujam sphere $\partial X$ can be extended to a homology plane $X$?
\end{introques}

\bibliography{homologyplanesandmazurmanifolds}
\bibliographystyle{amsalpha}

\end{document}